\documentclass{amsart}
\usepackage{amssymb}
\usepackage{color}
\usepackage{hyperref}
\usepackage{tikz-cd}
\usepackage{rotating}

\newtheorem{theorem}{Theorem}[section]
\newtheorem{lemma}[theorem]{Lemma}
\newtheorem{proposition}[theorem]{Proposition}
\newtheorem{corollary}[theorem]{Corollary}

\theoremstyle{definition}
\newtheorem{definition}[theorem]{Definition}

\newtheorem{remark}[theorem]{Remark}
\numberwithin{equation}{section}
\usepackage[all]{xy}
\DeclareMathOperator{\Po}{Po}
\DeclareMathOperator{\Ost}{Ost}
\DeclareMathOperator{\Cl}{Cl}

\DeclareMathOperator{\Gal}{Gal}

\DeclareMathOperator{\Ker}{Ker}
\DeclareMathOperator{\Coker}{Coker}

\DeclareMathOperator{\trans}{trans}

\usepackage[all]{xy}

\begin{document}

\title[The $S$-relative P\'olya group and $S$-Ostrowski quotient]{The $S$-relative P\'olya groups and $S$-Ostrowski quotients of number fields}

\author[E. Shahoseini]{Ehsan Shahoseini}
\address{Department of Mathematics, Tarbiat Modares University, 14115-134, Tehran, Iran}
\curraddr{} \email{ehsan\_shahoseini@modares.ac.ir}
\thanks{}

\author[A. Maarefparvar]{Abbas Maarefparvar$^{*}$}
\address{Department of Mathematics and Computer Science,
	University of Lethbridge,
	Lethbridge, Alberta, T1K 3M4, Canada}
\curraddr{}
\email{abbas.maarefparvar@uleth.ca}
\thanks{$^{*}$Corresponding author}
%\thanks{$^{1}$The second author's research was supported by a grant from IPM}
\date{}
\dedicatory{}
\commby{}

\subjclass[2010]{Primary 11R34, 11R37}

\begin{abstract}
Let $K/F$ be a finite extension of number fields and $S$ be a finite set of primes of $F$, including all the archimedean ones. In this paper, using some results of Gonz\'alez-Avil\'es \cite{Aviles}, we generalize the notions of  the relative P\'olya group $\Po(K/F)$ \cite{ChabertI,MR2} and the Ostrowski quotient $\Ost(K/F)$ \cite{SRM} to their $S$-versions. Using this approach, we obtain generalizations of some well-known results on the $S$-capitulation map, including an $S$-version of  Hilbert's theorem 94.
\end{abstract}

\maketitle

\vspace{.2cm} {\noindent \bf{Keywords:}}~   Ostrowski quotient, relative P\'olya group, capitulation problem, BRZ exact sequence,  transgressive ambiguous classes.

\vspace{.2cm} {\noindent \bf{Notations.}}~  The following notations will be used throughout this article:

For a number field $K$, the notations $I(K)$, $P(K)$, $\Cl(K)$,  $\mathcal{O}_K$, $h_K$, $U_K$, $H(K)$, $\Gamma(K)$ and $\mathbb{P}_K$  denote the group of fractional ideals, group of principal fractional ideals, ideal class group, ring of integers, class number, group of units, Hilbert class field,  genus field of $K$ and set of all prime ideals of $K$, respectively.

For $K/F$ a finite extension of number fields, 
 we use  $N_{K/F}$ to denote both the ideal norm and the element norm map from $K$ to $F$. Also ${\epsilon}_{K/F}:\Cl(F) \rightarrow \Cl(K)$ denotes the \textit{capitulation map} induced by the extension homomorphism from $I(F)$ to $I(K)$. Finally, for a prime ideal $\mathfrak{p} \in \mathbb{P}_F$, we denote by $e_{\mathfrak{p}(K/F)}$ and  $f_{\mathfrak{p}(K/F)}$  the ramification index and residue class degree of $\mathfrak{p}$ in $K/F$, respectively.

\section{On Relative P\'olya groups and Ostrowski Quotients in finite extensions of number fields}

A number field $K$, with ring of integers $\mathcal{O}_K$, is called a \textit{P\'olya field} if for every prime number $p$ and every integer $f \geq 1$, the \textit{Ostrowski ideal}
\begin{equation} \label{equation, Ostrowski ideals}
	\Pi_{p^f}(K)=:\prod_{\substack{\mathfrak{p}\in \mathbb{P}_K \\ N_{K/\mathbb{Q}}(\mathfrak{p})=p^f}} \mathfrak{p}
\end{equation}
is principal (by convention, if $K$ has no ideal with norm $p^f$, we set $\Pi_{p^f}(K)=\mathcal{O}_K$) \cite{Zantema}.  As an obstruction measure for $K$ to be a P\'olya field, the notion of \textit{P\'olya group} was introduced later in \cite[$\S$II.4]{Cahen-Chabert's book}.
\begin{definition} \label{definition, Polya group}
	The P\'olya group of a number field $K$, denoted by $\Po(K)$, is the subgroup of the ideal class group $\Cl(K)$ generated by the classes of all the Ostrowski ideals $\Pi_{p^f}(K)$.
\end{definition}

Hence the P\'olyaness of $K$ is equivalent to the triviality of its P\'olya group. In particular, P\'olya fields encompass all class number one number fields. However, there are many P\'olya fields with non-trivial ideal class group; for instance, every cyclotomic field is a P\'olya field \cite[Proposition 2.6]{Zantema}. The reader is referred to \cite{ChabertI, MR1, MR2, Zantema} for some results on P\'olya fields and P\'olya groups.

The notion of P\'olya group has been recently generalized to the \textit{relative P\'olya group} in the following sense \cite{ChabertI, MR2}.

\begin{definition} \label{definition, relative Polya group} 
	Let $K/F$ be a finite extension of number fields. The \textit{relative P\'olya group} of $K$ over $F$, denoted by $\Po(K/F)$, is the subgroup of $\Cl(K)$ generated by the classes of the \textit{relative Ostrowski ideals}  
	\begin{align} \label{equation, relative Ostrowski ideal}
		\Pi_{\mathfrak{p}^f}(K/F):=\prod_{\substack{\mathfrak{P}\in \mathbb{P}_K \\ N_{K/ F}(\mathfrak{P})=\mathfrak{p}^f}} \mathfrak{P},
	\end{align}
	where $\mathfrak{p} \in \mathbb{P}_F$, $f$ is a positive integer. By convention, if $K$ has no ideal with relative norm $\mathfrak{P}^f$ (over $F$), then $\Pi_{\mathfrak{P}^f}(K/F)=\mathcal{O}_K$.
\end{definition}

\begin{remark}
	Note that $\Po(K/\mathbb{Q})=\Po(K)$ and $\Po(K/K)=\Cl(K)$. Also,
one can show that for $F \subseteq M \subseteq K$, a tower of finite extensions of number fields, if $K/M$ is Galois then $\Po(K/F) \subseteq \Po(K/M)$ \cite[Lemma 2.10]{MR2}. In this case, for the tower
	\begin{equation*}
		F=M_0 \subseteq M_1=M  \subseteq \dots \subseteq M_{n-1}\subseteq M_n=K,
	\end{equation*}
	we get the \textit{filtration}
	\begin{equation*}
		\Po(K/F) \subseteq \Po(K/M) \subseteq \dots \subseteq \Po(K/M_{n-1}) \subseteq \Po(K/K)=\Cl(K)
	\end{equation*}
	between $\Po(K/F)$ and $\Cl(K)$, or equivalently a filtration on $\frac{\Cl(K)}{\Po(K/F)}$. For instance, for a prime ideal $\mathfrak{p}$ of $F$, let $\mathfrak{P}$ be a prime ideal of $K$ above $\mathfrak{p}$. Denote by $Z(\mathfrak{P}/\mathfrak{p})$ and $T(\mathfrak{P}/\mathfrak{p})$ the \textit{decomposition group} and the \textit{inertia group} of $K/F$ at $\mathfrak{P}$, respectively \cite[Chapter 1]{NChildress}. By the Galois correspondence, let $K_Z$ be the fixed field of $Z(\mathfrak{P}/\mathfrak{p})$, and $K_T$ be the fixed field of $T(\mathfrak{P}/\mathfrak{p})$. Then for the tower $F \subseteq K_Z \subseteq K_T \subseteq K$, we get the above filtration.
\end{remark}

In the case that $K/F$ is a finite Galois extension, the relative P\'olya group $\Po(K/F)$ coincides with the group of strongly ambiguous ideal classes of $K/F$ \cite[Section 2]{MR2}. Moreover, using some  cohomological results of Brumer and Rosen \cite[$\S$2]{Brumer-Rosen} and Zantema \cite{Zantema}  one may find the following exact sequence (``BRZ'' stands for these authors).

\begin{theorem} \cite[Theorem 2.2]{MR2} \label{theorem, generalization of the Zantema's exact sequence}
	Let $K/F$ be a finite Galois extension of number fields with Galois group $G$. Then the following sequence is exact:
		\begin{equation*} \label{equation, BRZ exact sequence}
				\tag{BRZ} 
			0 \rightarrow \Ker({\epsilon}_{K/F}) \xrightarrow{\theta_{K/F}} H^1(G,U_K) \rightarrow \bigoplus_{\mathfrak{p} \in \mathbb{P}_F} \frac{\mathbb{Z}}{e_{\mathfrak{p}(K/F)}\mathbb{Z}} \rightarrow \frac{\Po(K/F)}{{\epsilon}_{K/F}(\Cl(F))} \rightarrow 0,
	\end{equation*}
	where $\epsilon_{K/F}:\Cl(F) \rightarrow \Cl(K)$ denotes the capitulation map, and $e_{\mathfrak{p}(K/F)}$ denotes the ramification index of the prime $\mathfrak{p} \in \mathbb{P}_F$ in $K/F$.
\end{theorem}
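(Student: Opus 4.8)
The plan is to obtain the sequence (BRZ) by applying the snake lemma to a commutative diagram that compares the ideal-theoretic exact sequence of $F$ with its $G$-invariant analogue over $K$, and then to recognise the resulting connecting homomorphism as the capitulation cocycle.

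First I would write down the two rows. For the base field, use the tautological short exact sequence $0\to P(F)\to I(F)\to\Cl(F)\to 0$. For the top field, use
\[
0\to P(K)^G\to I(K)^G\to\Po(K/F)\to 0 ;
\]
this is exact because $I(K)^G\to\Cl(K)$ has kernel $I(K)^G\cap P(K)=P(K)^G$, while its image is generated by the classes of the $G$-invariant ideals $\mathfrak a_{\mathfrak p}:=\prod_{\mathfrak P\mid\mathfrak p}\mathfrak P$ ($\mathfrak p\in\mathbb P_F$); since $K/F$ is Galois all primes above $\mathfrak p$ share one residue degree $f_{\mathfrak p}$, whence $\mathfrak a_{\mathfrak p}=\Pi_{\mathfrak p^{f_{\mathfrak p}}}(K/F)$ and $\Pi_{\mathfrak p^{f}}(K/F)=\mathcal O_K$ for $f\ne f_{\mathfrak p}$, so that image is exactly $\Po(K/F)$ (this is the identification of $\Po(K/F)$ with the group of strongly ambiguous ideal classes recalled above). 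The two rows are then linked by the ideal-extension map $\iota\colon I(F)\to I(K)^G$, $\mathfrak a\mapsto\mathfrak a\mathcal O_K$, which carries $P(F)$ into $P(K)^G$ and induces $\epsilon_{K/F}$ on class groups; commutativity of the two squares is immediate from the definitions.

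Next I would identify the kernels and cokernels of the three vertical maps. The map $\iota$ is injective (distinct primes of $F$ extend to products of distinct primes of $K$) and its restriction $P(F)\to P(K)^G$ is injective because $F^\times\cap U_K=U_F$; hence the first two kernels vanish, while the third kernel is $\Ker(\epsilon_{K/F})$ since $\Po(K/F)\hookrightarrow\Cl(K)$. Writing $I(K)^G=\bigoplus_{\mathfrak p}\mathbb Z\,\mathfrak a_{\mathfrak p}$ and noting $\iota(\mathfrak p)=\mathfrak a_{\mathfrak p}^{e_{\mathfrak p(K/F)}}$ gives $\Coker(\iota)\cong\bigoplus_{\mathfrak p\in\mathbb P_F}\mathbb Z/e_{\mathfrak p(K/F)}\mathbb Z$. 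Finally, the long exact cohomology sequence of $1\to U_K\to K^\times\to P(K)\to 1$, together with Hilbert's Theorem~90 ($H^1(G,K^\times)=0$) and $(K^\times)^G=F^\times$, yields $H^1(G,U_K)\cong P(K)^G/\{x\mathcal O_K:x\in F^\times\}$, which is exactly $\Coker(P(F)\to P(K)^G)$. The snake lemma now delivers the exact sequence
\[
0\to\Ker(\epsilon_{K/F})\to H^1(G,U_K)\to\bigoplus_{\mathfrak p\in\mathbb P_F}\mathbb Z/e_{\mathfrak p(K/F)}\mathbb Z\to\frac{\Po(K/F)}{\epsilon_{K/F}(\Cl(F))}\to 0 .
\]

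It remains to check that the snake connecting homomorphism coincides with $\theta_{K/F}$. Tracing its construction, an element $[\mathfrak a]\in\Ker(\epsilon_{K/F})$ lifts to $\mathfrak a\in I(F)$; its image $\mathfrak a\mathcal O_K\in I(K)^G$ becomes trivial in $\Po(K/F)\subseteq\Cl(K)$, so $\mathfrak a\mathcal O_K=\alpha\mathcal O_K$ for some $\alpha\in K^\times$ (unique up to a unit), and the connecting map returns the class of $\alpha\mathcal O_K$ in $P(K)^G/\{x\mathcal O_K:x\in F^\times\}$, which under the Theorem~90 isomorphism corresponds to the $1$-cocycle $\sigma\mapsto\sigma(\alpha)\alpha^{-1}\in U_K$ — precisely the capitulation cocycle defining $\theta_{K/F}$. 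I expect the only delicate point to be the bookkeeping in the two cokernel identifications, especially matching $\Coker(P(F)\to P(K)^G)$ with $H^1(G,U_K)$ compatibly with this cocycle description of $\theta_{K/F}$; beyond that, the argument uses nothing deeper than Hilbert~90 and the strongly-ambiguous-class description of $\Po(K/F)$.
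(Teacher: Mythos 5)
Your proof is correct, and it takes a genuinely different route from the one the paper relies on. The paper does not prove \eqref{equation, BRZ exact sequence} itself: it quotes \cite[Theorem 2.2]{MR2}, where the sequence is extracted from cohomological results of Brumer--Rosen and Zantema, and within the present paper the sequence only reappears as the special case $S=S_{\infty}$ of \eqref{equation, S-BRZ}, i.e.\ of Gonz\'alez-Avil\'es' Theorem \ref{theorem, Aviles exact sequence} combined with Lemma \ref{lemma, Z/e_vZ is isomorphic to H^1(G_w,U_w)} and the identification \eqref{equation, relative Polya group equals to transgressive group}. What you do instead is a direct snake-lemma computation on the extension-of-ideals morphism from $0\to P(F)\to I(F)\to\Cl(F)\to 0$ to $0\to P(K)^G\to I(K)^G\to\Po(K/F)\to 0$: the second row is exact precisely because of the strongly-ambiguous-class description of $\Po(K/F)$, and the Galois hypothesis enters where you say it does, namely in seeing that the only nontrivial Ostrowski ideal over $\mathfrak{p}$ is $\prod_{\mathfrak{P}\mid\mathfrak{p}}\mathfrak{P}$ and that these ideals freely generate $I(K)^G$ (the $S=S_{\infty}$ case of Theorem \ref{theorem, free generators for I(K)SG}). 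Your two cokernel identifications, $\Coker\bigl(P(F)\to P(K)^G\bigr)\simeq H^1(G,U_K)$ via Hilbert 90 and $(K^{\times})^G=F^{\times}$, and $\Coker\bigl(I(F)\to I(K)^G\bigr)\simeq\bigoplus_{\mathfrak{p}}\mathbb{Z}/e_{\mathfrak{p}(K/F)}\mathbb{Z}$ (the unramified summands vanish, so the indexing matches the statement), are both sound, and your trace through the connecting homomorphism does recover the capitulation cocycle $\sigma\mapsto\sigma(\alpha)\alpha^{-1}\in U_K$, which is exactly how $\theta_{K/F}$ is defined. As for what each approach buys: yours is self-contained and elementary (essentially the classical ambiguous-class-number computation), makes $\theta_{K/F}$ explicit, and needs nothing beyond Hilbert 90; the transgression machinery of Gonz\'alez-Avil\'es used in this paper is heavier but generalizes at once to arbitrary $S$ and keeps track of the finer terms $H^1(G,P(K)_S)$ and $H^2(G,U_{K,S})$, which your diagram does not exhibit directly.
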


Some interesting consequences of Theorem \ref{theorem, generalization of the Zantema's exact sequence} can be found in \cite{ChabertI,MR2}.

\subsection{On the Ostrowski Quotient $\Ost(K/F)$}
It worth noting that, for $K/F$ a finite Galois extension of number fields, by \eqref{equation, BRZ exact sequence} if $\epsilon_{K/F}\left(\Cl(F)\right)=0$, then there exists a surjective map from $\bigoplus_{\mathfrak{p} \in \mathbb{P}_F} \mathbb{Z}/e_{\mathfrak{p}(K/F)}\mathbb{Z}$ onto the relative P\'olya group $\Po(K/F)$. Roughly speaking, in this case, $\Po(K/F)$ is \textit{controlled} by ramification. However, this assertion may not hold, in general. For instance, for a regular prime number $p > 19$ and an integer $n >1$, let $F=\mathbb{Q}(\zeta_p)$ and $K=\mathbb{Q}(\zeta_{p^n})$, where $\zeta_p$ (resp. $\zeta_{p^n}$) denote the $p$-th (resp. $p^n$-th) primitive root of unity. Then $\Po(K/F) \neq 0$, while the only map from $\bigoplus_{\mathfrak{p} \in \mathbb{P}_F} \mathbb{Z}/e_{\mathfrak{p}(K/F)}\mathbb{Z}$ to $\Po(K/F)$ is the zero map, see \cite[Example 3.1]{SRM} for the details. Motivating by this example, the authors and  A. Rajaei \cite[Section 3]{SRM} introduced the notion of Ostrowski quotient.

\begin{definition}  \label{definition, Ostrowski quotient}
Let $K/F$ be a finite extension of number fields. The \textit{Ostrowski quotient} of $K$ over $F$, denoted by $\Ost(K/F)$,  is defined as  
	\begin{equation} \label{equation, Ostrowski group general case}
		\Ost(K/F) := \dfrac{\Po(K/F)}{\Po(K/F) \cap \epsilon_{K/F}(Cl(F))}.
	\end{equation} 
	In particular, $\Ost(K/\mathbb{Q})=\Po(K/\mathbb{Q})=\Po(K)$ and $\Ost(K/K)= 0$. The extension $K/F$ is called ``\textit{Ostrowski}''  (or $K$ is called $F$-Ostrowski) if $\Ost(K/F)=0$.
\end{definition}

\begin{remark}
Note that $K/F$ is a finite Galois extension of number fields, then $\epsilon_{K/F}\left(\Cl(F)\right) \subseteq \Po(K/F)$ and \eqref{equation, BRZ exact sequence} can be re-written as
	\begin{equation*} \label{equation, BRZ exact sequence2}
		0 \rightarrow \Ker({\epsilon}_{K/F}) \xrightarrow{\theta_{K/F}} H^1(\Gal(K/F),U_K) \rightarrow \bigoplus_{\mathfrak{p} \in \mathbb{P}_F} \frac{\mathbb{Z}}{e_{\mathfrak{p}(K/F)}\mathbb{Z}} \rightarrow \Ost(K/F) \rightarrow 0.
	\end{equation*}
The reader is referred to \cite[Section 3]{SRM} for some results on Ostrowski quotients.
\end{remark}

\iffalse
\begin{proposition} \cite[Section 3]{SRM}
For $K/F$  a finite Galois extension of number fields with Galois group $G$, the following assertions hold:
\begin{itemize}
	\item[(1)] We have $\epsilon_{K/F}\left(\Cl(F)\right) \subseteq \Po(K/F)$ and \eqref{equation, BRZ exact sequence} can be re-written as
	\begin{equation*} \label{equation, BRZ exact sequence2}
		0 \rightarrow \Ker({\epsilon}_{K/F}) \xrightarrow{\theta_{K/F}} H^1(G,U_K) \rightarrow \bigoplus_{\mathfrak{p} \in \mathbb{P}_F} \frac{\mathbb{Z}}{e_{\mathfrak{p}(K/F)}\mathbb{Z}} \rightarrow \Ost(K/F) \rightarrow 0.
	\end{equation*}
	
\item[(2)] If $gcd(h_F,[K:F])=1$, then the following sequence is exact
\begin{equation*} 
0 \rightarrow H^1(G,U_K) \rightarrow \bigoplus _{\mathfrak{p} \in \mathbb{P}_K} \frac{\mathbb{Z}}{e_{\mathfrak{p} (K/F)} \mathbb{Z}} \rightarrow \Ost(K/F) \rightarrow 0.
\end{equation*}
\item[(3)] If either $gcd(h_K,[K:F])=1$ or all the finite places of $F$ are unramified in $K$, then $K/F$ is Ostrowski. In particular, the extensions $H(F)/F$ and $\Gamma(F)/F$ are Ostrowski, where $H(F)$ and $\Gamma(F)$ denote the Hilbert class field and thez genus field of $F$, respectively.
\end{itemize}
\end{proposition}

\fi

\section{The $S$-versions of relative P\'olya group and Ostrowski Quotient} \label{section, S-capitulation}

Throughout this section, we fix the following notations:

\begin{itemize}
	\item $K/F$: a finite extension of number fields;
	
	\item
	$S_{\infty}$: the set of all the archimedean primes of $F$;
	
	\vspace*{0.15cm}
	
	\item
	$S$: a finite set of primes of $F$ containing $S_{\infty}$ (We also denote by $S$ the set of the primes of $K$ which lie above the primes in $S$);
	
	\vspace*{0.15cm}

	\item $U_{F,S}$ (resp. $U_{K,S}$) : the group of $S$-units of $F$ (resp. $K$);
	\vspace*{0.15cm}

	\item
	$I(F)_S$ (resp. $I(K)_S$) : the group of fractional ideals of $F$ (resp. $K$) with support outside $S$;
	\vspace*{0.15cm}
	
		\item
	$P(F)_S$ (resp. $P(K)_S$) : the group of principal fractional ideals of $F$ (resp. $K$) with support outside $S$;
	\vspace*{0.15cm}

	\item
	$\Cl(F)_S$ (resp. $\Cl(K)_S$) : the $S$-ideal class group of $F$ (resp. $K$).
	\vspace*{0.15cm}

	%\item[] $\epsilon_{K/F,S}:  \Cl(F)_S \rightarrow  \Cl(K)_S^G$ denotes the \textit{$S$-capitulation map}.
	%\vspace*{0.15cm}
	
	%\item[]
	%$R_{K/F}$ denotes the set all non-archimedean primes of $F$ ramified in $K$.
	
\end{itemize}

%Let $K/F$ be a finite Galois extension of number fields with Galois group $G$. Following Avil\'es \cite{Aviles}, we introduce the notations below which will be used in the rest of the paper:

In the case that $K/F$ is Galois,  Gonz\'alez-Avil\'es \cite{Aviles} found some interesting results on the kernel and cokernel of the \textit{$S$-capitulation map} 
\begin{equation} \label{definition, S-capitulation map}
	\epsilon_{K/F,S}:  \Cl(F)_S \rightarrow  \left(\Cl(K)_S\right)^G
\end{equation}
 where $G=\Gal(K/F)$. A part of his cohomological tools has been restated as follows.

\begin{proposition} \cite[Section 2]{Aviles} \label{proposition, Aviles results}
Let $K/F$ be a finite Galois extension of number fields with Galois group $G$. With the notations of this section, the following sequences are exact:
	\begin{align}
		& 0 \rightarrow \left(P(K)_S\right)^G\rightarrow \left(I(K)_S\right)^G \rightarrow \left(\Cl(K)_S\right)_{\trans}^G \rightarrow 0, \label{equation, ES-PKG to IKG to ClG-trans}
		\\
		&0 \rightarrow \left(\Cl(K)_S\right)_{\trans}^G \rightarrow \left(\Cl(K)_S\right)^G  \rightarrow  H^1(G, P(K)_S) \rightarrow 0, \label{equation, ES-ClG-trans to ClG}
		\\
		& 0  \rightarrow H^1(G,P(K)_S) \rightarrow H^2(G,U_{K,S}) \rightarrow H^2(G,K^{\times}). \label{equation, ES-H1(G,PK) to H2(G,UK)}
	\end{align}
\end{proposition}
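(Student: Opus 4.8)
The plan is to deduce all three sequences from two short exact sequences of $G=\Gal(K/F)$-modules, together with Hilbert's Theorem~90 and the vanishing of $H^{1}$ of a permutation module. The two inputs are the sequence of $S$-units,
\begin{equation*}
1 \longrightarrow U_{K,S} \longrightarrow K^{\times} \xrightarrow{\ \mathrm{div}_S\ } P(K)_S \longrightarrow 1 ,
\end{equation*}
where $\mathrm{div}_S(\alpha)=\sum_{\mathfrak{P}\in\mathbb{P}_K\setminus S}v_{\mathfrak{P}}(\alpha)\,\mathfrak{P}$ has kernel exactly $U_{K,S}$, and the defining sequence of the $S$-class group,
\begin{equation*}
0 \longrightarrow P(K)_S \longrightarrow I(K)_S \longrightarrow \Cl(K)_S \longrightarrow 0 .
\end{equation*}
Recall that $\left(\Cl(K)_S\right)_{\trans}^{G}$ is by definition the kernel of the connecting (transgression) homomorphism $\delta\colon\left(\Cl(K)_S\right)^{G}\to H^{1}(G,P(K)_S)$ arising from the second input sequence, so that from the long exact cohomology sequence
\begin{equation*}
0 \to \left(P(K)_S\right)^{G} \to \left(I(K)_S\right)^{G} \to \left(\Cl(K)_S\right)^{G} \xrightarrow{\ \delta\ } H^{1}(G,P(K)_S) \to H^{1}(G,I(K)_S) \to \cdots
\end{equation*}
one reads off that $\left(\Cl(K)_S\right)_{\trans}^{G}$ is also the image of $\left(I(K)_S\right)^{G}$ in $\Cl(K)_S$.

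Granting this, \eqref{equation, ES-PKG to IKG to ClG-trans} is the first isomorphism theorem for the surjection $\left(I(K)_S\right)^{G}\twoheadrightarrow\left(\Cl(K)_S\right)_{\trans}^{G}$, whose kernel is $\left(P(K)_S\right)^{G}$ by exactness of the displayed sequence at $\left(I(K)_S\right)^{G}$. For \eqref{equation, ES-ClG-trans to ClG}, the inclusion $\left(\Cl(K)_S\right)_{\trans}^{G}=\ker\delta\hookrightarrow\left(\Cl(K)_S\right)^{G}$ with quotient the image of $\delta$ is again immediate from the displayed sequence; the only remaining point is surjectivity of $\delta$, which will follow once I show $H^{1}(G,I(K)_S)=0$. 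This vanishing is the one non-formal step. The module $I(K)_S$ is the free abelian group on the primes of $K$ outside $S$, permuted by $G$; for each $\mathfrak{p}\in\mathbb{P}_F\setminus S$ the primes above $\mathfrak{p}$ form a single $G$-orbit with stabilizer the decomposition group $Z(\mathfrak{P}/\mathfrak{p})$, so
\begin{equation*}
I(K)_S \;\cong\; \bigoplus_{\mathfrak{p}\in\mathbb{P}_F\setminus S}\operatorname{Ind}_{Z(\mathfrak{P}/\mathfrak{p})}^{G}\mathbb{Z} .
\end{equation*}
Since $G$ is finite, $H^{1}(G,-)$ commutes with this (possibly infinite) direct sum --- it commutes with filtered colimits and the sum is the colimit of its finite sub-sums --- so Shapiro's lemma yields $H^{1}(G,I(K)_S)\cong\bigoplus_{\mathfrak{p}}H^{1}(Z(\mathfrak{P}/\mathfrak{p}),\mathbb{Z})=\bigoplus_{\mathfrak{p}}\operatorname{Hom}(Z(\mathfrak{P}/\mathfrak{p}),\mathbb{Z})=0$, the last equality because each decomposition group is finite.

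Finally, for \eqref{equation, ES-H1(G,PK) to H2(G,UK)} I would pass to the long exact $G$-cohomology sequence of the $S$-unit sequence,
\begin{equation*}
\cdots \to H^{1}(G,K^{\times}) \to H^{1}(G,P(K)_S) \xrightarrow{\ \partial\ } H^{2}(G,U_{K,S}) \to H^{2}(G,K^{\times}) \to \cdots ,
\end{equation*}
and invoke Hilbert's Theorem~90, $H^{1}(G,K^{\times})=0$, which makes $\partial$ injective; exactness at $H^{2}(G,U_{K,S})$ is then precisely the asserted three-term exact sequence. Thus the whole proof amounts to writing down the correct long exact cohomology sequences; the only genuine content --- and the single place where one should be a little careful --- is the vanishing $H^{1}(G,I(K)_S)=0$, i.e. that $H^{1}(G,-)$ may be commuted past the infinite direct sum indexed by the primes of $F$ outside $S$.
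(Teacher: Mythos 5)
Your proof is correct and is essentially the argument behind the cited result: the paper itself gives no proof (it quotes \cite[Section 2]{Aviles}), and González-Avilés obtains these sequences exactly as you do, from the long exact cohomology sequences of $1\to U_{K,S}\to K^{\times}\to P(K)_S\to 1$ and $0\to P(K)_S\to I(K)_S\to \Cl(K)_S\to 0$, using Hilbert's Theorem 90 and the vanishing $H^1(G,I(K)_S)=0$ for the permutation module $I(K)_S$, which you correctly identify as the only non-formal step. The single implicit point worth making explicit is that $P(K)_S$ must be understood as the group of principal fractional ideals of the ring of $S$-integers, i.e.\ the image of $\mathrm{div}_S$ (so $P(K)_S\simeq K^{\times}/U_{K,S}$ and $I(K)_S/P(K)_S\simeq \Cl(K)_S$), which is the intended reading in \cite{Aviles} and in this paper.
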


\begin{remark}
Using the exact sequences \eqref{equation, ES-ClG-trans to ClG} and \eqref{equation, ES-H1(G,PK) to H2(G,UK)}, we obtain the  map 
	\begin{equation} \label{equation, S-trans map}
\trans_{K/F,S}: \left(\Cl(K)_S\right)^G   \rightarrow  H^2(G,U_{K,S})
	\end{equation} 
which is called the \textit{transgression map}. Also its kernel, i.e., $ \left(\Cl(K)_S\right)_{\trans}^G $, is called the group of \textit{transgressive ambiguous classes} \cite[Section 2]{Aviles}.
\end{remark}

As mentioned before, the relative P\'olya group $\Po(K/F)$ coincides with the group of strongly ambiguous ideal classes in $K/F$, i.e., $\Po(K/F)=I(K)^G/P(K)^G$ \cite[$\S$ 2]{MR2}. Hence using the exact sequence \eqref{equation, ES-PKG to IKG to ClG-trans} for $S=S_{\infty}$ we obtain
\begin{equation} \label{equation, relative Polya group equals to transgressive group}
\Po(K/F)=(\Cl(K)_{S_{\infty}})^G_{trans}.
\end{equation}
%(Note that $\Cl(K)_{S_{\infty}}=\Cl(K)$ and $\Cl(F)_{S_{\infty}}=\Cl(F)$).
%In particular,  one can see that for $K/F$ a finite Galois extension of number fields with Galois group $G$ the notion of \textit{the group of transgressive ambiguous classes} of $K/F$, denoted by $\left(\Cl(K)_S\right)^G_{trans}$,  is a generalization of  $\Po(K/F)$, where $\left(\Cl(K)_S\right)^G_{trans}$ is the kernel of the \textit{transgression map}
%\begin{equation} \label{equation, transgression map in Aviles papaer}
%	trans_{K/F}: \Cl(K)^G \rightarrow  H^2(G,U_K),
%\end{equation}
% see  \cite[$\S$2]{Aviles}. 
This leads us to define the notion of \textit{$S$-relative P\'olya group} in the following sense.

\begin{definition} \label{definition, S-relative Polya group}
	Let $K/F$ be a finite extension of number fields, and $S$ be a finite set of the primes of $F$, containing all the archimedean ones. The \textit{$S$-relative P\'olya group of $K$ over $F$}, denoted by $\Po(K/F)_S$, is the subgroup of the ideal class group of $K$ generated by  all the classes of the relative Ostrowski ideals $\Pi_{\mathfrak{p}^f}(K/F)$ \eqref{equation, relative Ostrowski ideal} with support outside $S$:
	\begin{equation*}
		\Po(K/F)_S:=\left< \left[ \Pi_{\mathfrak{p}^f}(K/F) \right] \, : \, \mathfrak{p} \in \mathbb{P}_{F} \backslash S, \, f \in \mathbb{N} \right>.
	\end{equation*}
\end{definition}

\begin{remark} 
Let $S_1 \subseteq S_2$ be two finite sets of the primes of $F$, containing $S_{\infty}$. Then by the above definition, it immediately follows that
	\begin{equation*}
	\Po(K/F)_{S_2} \subseteq \Po(K/F)_{S_1}.
	\end{equation*}
	 In particular,  $\Po(K/F)_{S_{\infty}}=\Po(K/F)$  is the maximal element in the family
	 \begin{equation*}
	 \{\Po(K/F)_S  \, : \,  \text{$S$ a finite set of the primes of $F$ with $S_{\infty} \subseteq S$} \}
	 \end{equation*}
	  of the subgroups of $\Cl(K)$. 
\end{remark}

%\begin{proof}
%	Immediately follows from Definition \eqref{definition, S-relative Polya group}.
%	\end{proof}

\begin{theorem} \label{theorem, free generators for I(K)SG}
Let $K/F$ be a finite extension of number fields with Galois group $G$. Let $S$ be a finite set of the primes of $F$, containing all the archimedean ones. Denote also by $S$ the set of the primes of $K$ which lie above the primes in $S$. Then 
	\begin{equation*}
		\{ \Pi_{\mathfrak{p}^f}(K/F) \, : \, \mathfrak{p} \in \mathbb{P}_{F}\backslash S, \, f \in \mathbb{N}\}
	\end{equation*}	
	is a set of free generators for $\left(I(K)_S\right)^G$. 
\end{theorem}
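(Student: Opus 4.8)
The plan is to recognize $I(K)_S$ as a permutation $\mathbb{Z}[G]$-module and to read off its group of invariants directly from the orbit structure. First I would recall that, by definition, $I(K)_S$ is the free abelian group on the set of prime ideals $\mathfrak{P}\in\mathbb{P}_K$ lying above no prime of $S$, and that a prime $\mathfrak{P}$ of $K$ lies outside $S$ exactly when the prime $\mathfrak{p}=\mathfrak{P}\cap F$ lies outside $S$. Since $K/F$ is Galois, $G$ permutes this basis with orbits in natural bijection with $\mathbb{P}_F\setminus S$: the orbit attached to $\mathfrak{p}\in\mathbb{P}_F\setminus S$ is the full fibre $\{\mathfrak{P}\in\mathbb{P}_K:\mathfrak{P}\mid\mathfrak{p}\}$, on which $G$ acts transitively.

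Next I would invoke the standard description of the invariants of a permutation module: for a free abelian group on a $G$-set with finite orbits, the $G$-fixed subgroup is free abelian on the orbit sums. In multiplicative notation this says that a fractional ideal $\prod_{\mathfrak{P}}\mathfrak{P}^{a_{\mathfrak{P}}}$ in $I(K)_S$ is $G$-invariant if and only if the exponent $a_{\mathfrak{P}}$ depends only on $\mathfrak{p}=\mathfrak{P}\cap F$, and that $\left(I(K)_S\right)^G$ is free abelian on the set of ``orbit products'' $\mathfrak{a}_{\mathfrak{p}}:=\prod_{\mathfrak{P}\mid\mathfrak{p}}\mathfrak{P}$ with $\mathfrak{p}$ ranging over $\mathbb{P}_F\setminus S$. (Concretely, $I(K)_S\cong\bigoplus_{\mathfrak{p}\notin S}\mathbb{Z}[G/Z(\mathfrak{P}/\mathfrak{p})]$ as $G$-modules, and invariants commute with direct sums.)

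It then remains to identify the $\mathfrak{a}_{\mathfrak{p}}$ with the relative Ostrowski ideals, and this is where the one genuine point enters: the rigidity of Galois extensions. For $\mathfrak{p}\in\mathbb{P}_F\setminus S$, all primes $\mathfrak{P}\mid\mathfrak{p}$ share the same residue class degree $f_{\mathfrak{p}(K/F)}$, so $N_{K/F}(\mathfrak{P})=\mathfrak{p}^{f_{\mathfrak{p}(K/F)}}$ for every such $\mathfrak{P}$. Hence $\Pi_{\mathfrak{p}^{\,f_{\mathfrak{p}(K/F)}}}(K/F)=\prod_{\mathfrak{P}\mid\mathfrak{p}}\mathfrak{P}=\mathfrak{a}_{\mathfrak{p}}$, while $\Pi_{\mathfrak{p}^{f}}(K/F)=\mathcal{O}_K$ for every $f\neq f_{\mathfrak{p}(K/F)}$ by the convention of Definition \ref{definition, relative Polya group}. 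Therefore the set in the statement equals, after deleting the copies of the trivial ideal $\mathcal{O}_K$ (the identity element, hence irrelevant to a generating set), precisely the set $\{\mathfrak{a}_{\mathfrak{p}}:\mathfrak{p}\in\mathbb{P}_F\setminus S\}$, which is a $\mathbb{Z}$-basis of $\left(I(K)_S\right)^G$ by the previous step; this proves the claim.

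I do not anticipate a serious obstacle here: the argument is essentially bookkeeping once the permutation-module description is in place. The only places requiring care are (i) checking that $G$-orbits on primes of $K$ outside $S$ correspond bijectively to primes of $F$ outside $S$ (so that nothing is lost or double-counted), and (ii) handling the convention that $\Pi_{\mathfrak{p}^f}(K/F)=\mathcal{O}_K$ when there is no prime of the prescribed relative norm, so that the many formally listed Ostrowski ideals for a fixed $\mathfrak{p}$ collapse to the single nontrivial one $\mathfrak{a}_{\mathfrak{p}}$ without affecting freeness.
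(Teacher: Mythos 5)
Your proposal is correct and follows essentially the same route as the paper: the paper's proof verifies by hand exactly what your permutation-module description encapsulates, namely that $G$ acts transitively on the primes above a fixed $\mathfrak{p}\notin S$ (so each $\Pi_{\mathfrak{p}^{f_{\mathfrak{p}(K/F)}}}(K/F)$ is $G$-invariant) and that a $G$-invariant ideal has constant exponents along each orbit (so it factors uniquely into these orbit products). Your explicit handling of the convention $\Pi_{\mathfrak{p}^{f}}(K/F)=\mathcal{O}_K$ for $f\neq f_{\mathfrak{p}(K/F)}$ is a small point the paper leaves implicit, but it does not change the argument.
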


\begin{proof}
	For a prime $\mathfrak{p} \in \mathbb{P}_{F}$, consider its decomposition in $K/F$ as 
	\begin{equation*}
		\mathfrak{p} \mathcal{O}_K=\left( \mathfrak{P}_1 \dots \mathfrak{P}_g\right)^{e_{\mathfrak{p}(K/F)}}=\left(\Pi_{\mathfrak{p}^{f_{\mathfrak{p}(K/F)}}}(K/F)\right)^{e_{\mathfrak{p}(K/F)}},	
	\end{equation*}
where $e_{\mathfrak{p}(K/F)}$ and  $f_{\mathfrak{p}(K/F)}$ denote the ramification index and the residue class degree of $\mathfrak{p}$ in $K/F$, respectively. Since the Galois group $G$ permutes the primes $\mathfrak{P}_i$'s transitively, we have
\begin{equation*}
	\left(\Pi_{\mathfrak{p}^{f_{\mathfrak{p}(K/F)}}}(K/F)\right)^{\sigma}=\Pi_{\mathfrak{p}^{f_{\mathfrak{p}(K/F)}}}(K/F) , \quad \forall \sigma \in G.
\end{equation*}
Hence if $ \mathfrak{p} \not \in S$, then $\Pi_{\mathfrak{p}^{f_{\mathfrak{p}(K/F)}}}(K/F) \in \left(I(K)_S\right)^G$. Now let $\mathfrak{a} \in \left(I(K)_S\right)^G$. For each prime $\mathfrak{P}$ of $K$ dividing $\mathfrak{a}$, let
\begin{equation*}
	\mathfrak{a}=\mathfrak{P}^{\nu_{\mathfrak{P}}(\mathfrak{a})}.\mathfrak{m}, \quad \mathfrak{P}  \nmid \mathfrak{m}.
\end{equation*}
For every $\sigma \in G$, we have
\begin{equation*}
\mathfrak{a}=\mathfrak{a}^{\sigma}=\left(\sigma(\mathfrak{P})\right)^{\nu_{\mathfrak{P}}(\mathfrak{a})}.\sigma(\mathfrak{m}),
\end{equation*}
which implies that the prime ideal $\sigma(\mathfrak{P})$ is also dividing $\mathfrak{a}$ with the same exponent $\nu_{\mathfrak{P}}(\mathfrak{a})$. Therefore 
\begin{equation*}
\mathfrak{a}=\prod_{\substack{\mathfrak{P}\in \mathbb{P}_K, \, \mathfrak{P} \mid \mathfrak{a} \\ \mathfrak{P} \cap F=\mathfrak{p}}}\left(\Pi_{\mathfrak{p}^{f_{\mathfrak{p}(K/F)}}}(K/F)\right)^{\nu_{\mathfrak{P}}(\mathfrak{a})},
\end{equation*}
and the assertion is proved, since $\mathfrak{a} \in \left(I(K)_S\right)^G$.
\end{proof}

\begin{corollary} \label{corollary, S-RPG coincides with Cl(K)SGtrans}
	Let $K/F$ be a finite extension of number fields with Galois group $G$. Let $S$ be a finite set of the primes of $F$, containing all the archimedean ones. Then
	\begin{equation} \label{equation, S-RPG coincides with Cl(K)SGtrans}
		\Po(K/F)_S=\left(\Cl(K)_S\right)^G_{\trans}.
	\end{equation}
\end{corollary}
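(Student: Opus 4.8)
The plan is to reduce \eqref{equation, S-RPG coincides with Cl(K)SGtrans} to the exact sequence \eqref{equation, ES-PKG to IKG to ClG-trans} of Gonz\'alez-Avil\'es, exactly as the passage to $S=S_\infty$ in \eqref{equation, relative Polya group equals to transgressive group} does in the classical case. Concretely, \eqref{equation, ES-PKG to IKG to ClG-trans} identifies $\left(\Cl(K)_S\right)^G_{\trans}$ with the cokernel $\left(I(K)_S\right)^G / \left(P(K)_S\right)^G$; so it suffices to show that the image of this cokernel under the natural map $\left(I(K)_S\right)^G \to \Cl(K)_S$ is precisely the subgroup $\Po(K/F)_S$ of $\Cl(K)_S$ generated by the classes of the relative Ostrowski ideals supported outside $S$. (Here I should note that $\Cl(K)_S$, the $S$-class group of $K$, is a quotient of $\Cl(K)$, and one either works directly in $\Cl(K)_S$ or remarks that the definition of $\Po(K/F)_S$ as a subgroup of $\Cl(K)$ maps onto the corresponding subgroup of $\Cl(K)_S$; I would phrase the statement in $\Cl(K)_S$ to match the source of \eqref{equation, ES-PKG to IKG to ClG-trans}.)

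The first step is to invoke Theorem \ref{theorem, free generators for I(K)SG}: the set $\{\Pi_{\mathfrak{p}^{f}}(K/F) : \mathfrak{p}\in\mathbb{P}_F\setminus S,\ f\in\mathbb{N}\}$ — really the finitely many $\Pi_{\mathfrak{p}^{f_{\mathfrak{p}(K/F)}}}(K/F)$, since the others are $\mathcal{O}_K$ — is a $\mathbb{Z}$-basis of $\left(I(K)_S\right)^G$. Hence the image of $\left(I(K)_S\right)^G$ in $\Cl(K)_S$ is generated exactly by the classes $[\Pi_{\mathfrak{p}^{f}}(K/F)]$ with $\mathfrak{p}\notin S$, which is by Definition \ref{definition, S-relative Polya group} the group $\Po(K/F)_S$. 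The second step is to observe, using exactness of \eqref{equation, ES-PKG to IKG to ClG-trans} at $\left(I(K)_S\right)^G$, that the kernel of $\left(I(K)_S\right)^G \to \Cl(K)_S$ — which is $\left(I(K)_S\right)^G \cap P(K)_S$, the $G$-invariant principal $S$-ideals — coincides with $\left(P(K)_S\right)^G$; therefore the image is isomorphic to $\left(I(K)_S\right)^G/\left(P(K)_S\right)^G \cong \left(\Cl(K)_S\right)^G_{\trans}$, with the isomorphism being compatible with the inclusion into $\Cl(K)_S$. Combining the two identifications of the image gives \eqref{equation, S-RPG coincides with Cl(K)SGtrans}.

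The only real subtlety — the step I would be most careful about — is that I am implicitly using $\left(I(K)_S\right)^G \cap P(K)_S = \left(P(K)_S\right)^G$, i.e.\ that a $G$-invariant fractional ideal of $K$ (with support outside $S$) which happens to be principal is the image of a $G$-invariant \emph{principal} $S$-ideal. This is not formally automatic from the displayed sequences alone; rather it is exactly the content of exactness of \eqref{equation, ES-PKG to IKG to ClG-trans} at $\left(I(K)_S\right)^G$ (the map $\left(P(K)_S\right)^G \to \left(I(K)_S\right)^G$ has image the kernel of $\left(I(K)_S\right)^G \to \left(\Cl(K)_S\right)^G_{\trans}$), so I would simply cite Proposition \ref{proposition, Aviles results} for it and not attempt to reprove it. With that in hand the argument is a two-line diagram chase, and the whole corollary is genuinely a corollary of Theorem \ref{theorem, free generators for I(K)SG} together with \eqref{equation, ES-PKG to IKG to ClG-trans}.
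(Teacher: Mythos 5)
Your proposal is correct and follows essentially the same route as the paper: both identify $\Po(K/F)_S$ with $\left(I(K)_S\right)^G/\left(P(K)_S\right)^G$ via Theorem \ref{theorem, free generators for I(K)SG} and then conclude from the exact sequence \eqref{equation, ES-PKG to IKG to ClG-trans}. The ``subtlety'' you flag, namely $\left(I(K)_S\right)^G\cap P(K)_S=\left(P(K)_S\right)^G$, is in fact immediate because the $G$-action on $P(K)_S$ is the restriction of that on $I(K)_S$, and the paper treats it (as well as the identification of $\Po(K/F)_S$ with its image in $\Cl(K)_S$) just as tacitly as you do.
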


\begin{proof}
By Theorem \ref{theorem, free generators for I(K)SG} we have $\Po(K/F)_S=\left(I(K)_S\right)^G/\left(P(K)_S\right)^G$. Now the assertion immediately follows from the exact sequence \eqref{equation, ES-PKG to IKG to ClG-trans}.
\end{proof}

\subsection{$S$-Ostrowski quotient}
Recall that, as a modification of the notion of the relative P\'olya group, the notion of Ostrowski quotient $\Ost(K/F)$ has been recently defined in \cite{SRM}, see Definition \ref{definition, Ostrowski quotient}.
Similar to the $S$-relative P\'olya group, one can naturally define the $S$-relative version of the Ostrowski quotient. 

\begin{definition}  \label{definition, S-relative Ostrowski group}
	Let $K/F$ be a finite extension of number fields, and $S$ be a finite set of the primes of $F$, containing all the archimedean ones. The \textit{$S$-relative Ostrowski quotient}  $\Ost(K/F)_S$  is defined as  
	\begin{equation} \label{equation, Ostrowski group general case}
		\Ost(K/F)_S := \dfrac{\Po(K/F)_S}{\Po(K/F)_S \cap \epsilon_{K/F,S}(Cl(F)_S)}.
	\end{equation} 
	In particular, $\Ost(K/F)_{S_{\infty}}=\Ost(K/F)$. %as defined in \cite{Ehsan Thesis}.
	The extension $K/F$ is called \textit{$S$-Ostrowski}  (or $K$ is called \textit{$(F,S)$-Ostrowski}) if $\Ost(K/F)_S$ is trivial.
\end{definition}

\begin{remark}
	Let $K/F$ be a finite Galois extension with Galois group $G$. Then
	\begin{equation*}
		\epsilon_{K/F,S}\left(Cl(F)_S\right) \subseteq \left(\Cl(K)_S\right)^G_{trans}=\Po(K/F)_S,
	\end{equation*}
	where  
	\begin{equation*}
		\epsilon_{K/F,S}:  \Cl(F)_S \rightarrow  \left(\Cl(K)_S\right)^G
	\end{equation*}
	denotes the $S$-capitulation map. Hence by considering the map
	\begin{equation} \label{equation, s-capitulation map prime}
		\epsilon_{K/F,S}^{\prime}:  \Cl(F)_S \rightarrow \Po(K/F)_S,
	\end{equation}
	induced by $\epsilon_{K/F,S}$, we have
	\begin{equation} \label{equation, S-relative Ostrowski quotient coincides with cokernel of epsilon'}
		\Ost(K/F)_S=\Coker \left(\epsilon^{\prime}_{K/F,S}\right).
	\end{equation}
	
\end{remark}

\subsection{The $S$-BRZ exact sequence}
One can restate some results of Avil\'es  \cite{Aviles} in terms of the $S$-relative P\'olya group and the $S$-relative Ostrowski quotient.

\begin{proposition}\cite[Proposition 2.2]{Aviles}
	For $K/F$ a finite Galois extension with Galois group $G$, the following sequence is exact	
	\begin{equation*}
		0 \rightarrow \Ost(K/F)_S \rightarrow \Coker(\epsilon_{K/F,S}) \rightarrow H^1(G,P(K)_S) \rightarrow  0.
	\end{equation*}
	In particular, for $S=S_{\infty}$ we obtain the exact sequence
	\begin{equation*}
		0 \rightarrow 	\Ost(K/F) \rightarrow \frac{\Cl(K)^G}{\epsilon_{K/F}\left(\Cl(F)\right)} \rightarrow H^1(G,P(K)) \rightarrow  0,
	\end{equation*}
	where $P(K)$ denotes the group of principal fractional ideals of $K$.
\end{proposition}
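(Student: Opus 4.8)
The plan is to identify $\Ost(K/F)_S$ with the kernel of the composite map $\trans'$ obtained by restricting the transgression map $\trans_{K/F,S}$ of \eqref{equation, S-trans map} to the image of $\epsilon'_{K/F,S}$, and then to splice together the two exact sequences \eqref{equation, ES-ClG-trans to ClG} and \eqref{equation, ES-H1(G,PK) to H2(G,UK)} from Proposition \ref{proposition, Aviles results}. By Corollary \ref{corollary, S-RPG coincides with Cl(K)SGtrans} we have $\Po(K/F)_S = \left(\Cl(K)_S\right)^G_{\trans}$, which by \eqref{equation, ES-ClG-trans to ClG} is precisely the kernel of the map $\left(\Cl(K)_S\right)^G \rightarrow H^1(G,P(K)_S)$; call this latter map $\beta$. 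So the key diagram-chase input is: $\Po(K/F)_S = \Ker(\beta)$, and $\beta$ is surjective.

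First I would record, as already noted in the remark preceding the statement, that $\epsilon_{K/F,S}\left(\Cl(F)_S\right) \subseteq \Po(K/F)_S = \Ker(\beta)$, so that the composite $\beta \circ \epsilon_{K/F,S}$ is zero; this is exactly what makes $\epsilon'_{K/F,S}\colon \Cl(F)_S \rightarrow \Po(K/F)_S$ well-defined, and by \eqref{equation, S-relative Ostrowski quotient coincides with cokernel of epsilon'} we have $\Ost(K/F)_S = \Coker\left(\epsilon'_{K/F,S}\right)$. Next I would consider the short exact sequence of $G$-modules type situation assembled from the inclusion $\Po(K/F)_S \hookrightarrow \left(\Cl(K)_S\right)^G$ and the surjection $\beta$: namely $0 \rightarrow \Po(K/F)_S \rightarrow \left(\Cl(K)_S\right)^G \xrightarrow{\beta} H^1(G,P(K)_S) \rightarrow 0$. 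Quotienting the first two terms by the image of $\Cl(F)_S$ under $\epsilon_{K/F,S}$ (which lands inside the first term) and applying the snake lemma, or just an elementary cokernel computation, yields
\begin{equation*}
0 \rightarrow \frac{\Po(K/F)_S}{\epsilon_{K/F,S}\left(\Cl(F)_S\right)} \rightarrow \frac{\left(\Cl(K)_S\right)^G}{\epsilon_{K/F,S}\left(\Cl(F)_S\right)} \rightarrow H^1(G,P(K)_S) \rightarrow 0,
\end{equation*}
since $\beta$ remains surjective and its kernel on the quotient is $\Po(K/F)_S / \epsilon_{K/F,S}\left(\Cl(F)_S\right)$. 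The leftmost term is $\Coker\left(\epsilon'_{K/F,S}\right) = \Ost(K/F)_S$ and the middle term is $\Coker(\epsilon_{K/F,S})$, giving the claimed exact sequence.

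The main obstacle I anticipate is purely bookkeeping: making sure the two notions of ``image of $\Cl(F)_S$'' agree, i.e., that $\epsilon'_{K/F,S}(\Cl(F)_S)$ inside $\Po(K/F)_S$ coincides with $\epsilon_{K/F,S}(\Cl(F)_S)$ viewed inside $\left(\Cl(K)_S\right)^G$ under the inclusion $\Po(K/F)_S \hookrightarrow \left(\Cl(K)_S\right)^G$ — this is immediate from how $\epsilon'_{K/F,S}$ is defined in \eqref{equation, s-capitulation map prime}, but it must be stated. A secondary point is that $\Po(K/F)_S \cap \epsilon_{K/F,S}(\Cl(F)_S)$ in \eqref{equation, Ostrowski group general case} equals $\epsilon_{K/F,S}(\Cl(F)_S)$ outright, precisely because the image already lies in $\Po(K/F)_S$; so the intersection in the definition is harmless and the two descriptions of $\Ost(K/F)_S$ match. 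For the final sentence of the statement, specializing $S = S_{\infty}$ and using $\Cl(K)_{S_\infty} = \Cl(K)$, $P(K)_{S_\infty} = P(K)$, $\left(\Cl(K)\right)^G_{\trans} = \Po(K/F)$ via \eqref{equation, relative Polya group equals to transgressive group}, together with the fact (recalled in the text) that $H^1(G,U_K)$ sits in the relevant sequence and $\epsilon_{K/F}(\Cl(F)) \subseteq \Po(K/F)$ in the Galois case, recovers the displayed $S_\infty$-version verbatim.
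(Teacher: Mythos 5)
Your argument is correct and is essentially the paper's own route: the paper simply cites Gonz\'alez-Avil\'es' Proposition 2.2, and its restatement in terms of $\Ost(K/F)_S$ is exactly your splice of the exact sequence \eqref{equation, ES-ClG-trans to ClG} with Corollary \ref{corollary, S-RPG coincides with Cl(K)SGtrans} and the containment $\epsilon_{K/F,S}\left(\Cl(F)_S\right) \subseteq \Po(K/F)_S$ noted in the remark, followed by the elementary quotient of the resulting short exact sequence by that image. The bookkeeping points you flag (the intersection in \eqref{equation, Ostrowski group general case} collapsing to $\epsilon_{K/F,S}(\Cl(F)_S)$, and the $S=S_{\infty}$ specialization via \eqref{equation, relative Polya group equals to transgressive group}) are handled correctly, so nothing is missing.
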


\begin{lemma} \cite[Lemma 2.2]{Aviles} \label{lemma, Z/e_vZ is isomorphic to H^1(G_w,U_w)}
	Let $K/F$ be a finite Galois extension of number fields with Galois group $G$. Let $\mathfrak{p}$ be a non-archimedean prime of $F$ which is ramified in $K$. Then there exists a canonical isomorphism
	\begin{align*}
		&\mathbb{Z}/e_{\mathfrak{p}(K/F)}\mathbb{Z} \simeq H^1(G_{\mathfrak{P}},U_{K_{\mathfrak{P}}}), %\\
	%	&[m] \, (\mathrm{mod}\, e_{\mathfrak{p}(K/F)})  \mapsto  [\xi_m],
	\end{align*}
	where  $e_{\mathfrak{p}(K/F)}$ denotes the ramification index of $\mathfrak{p}$ in $K$, $\mathfrak{P}$ is a prime of $K$ above $\mathfrak{p}$, $K_{\mathfrak{P}}$ (resp. $F_{\mathfrak{p}}$) is the localizations of $K$ (resp. $F$) at $\mathfrak{P}$ (resp. at $\mathfrak{p}$), $G_{\mathfrak{P}}$ is the Galois group $\Gal(K_{\mathfrak{P}}/F_{\mathfrak{p}})$ and $U_{K_{\mathfrak{P}}}$ is the group of units in $K_{\mathfrak{P}}$.  
	%and $\xi_m$ is defined as follows:
	%\begin{align*}
	%	\xi_m: G_{\mathfrak{P}} &\rightarrow U_{K_{\mathfrak{P}}} \\
	%	\sigma &\mapsto \beta^{\sigma}/\beta \quad \text{with} \, \, \ord_{\mathfrak{P}}(\beta)=m.
%	\end{align*}
\end{lemma}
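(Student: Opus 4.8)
The plan is to deduce the isomorphism from the long exact cohomology sequence attached to the valuation sequence of the local field $K_{\mathfrak{P}}$, together with the local form of Hilbert's Theorem~90. Fix a prime $\mathfrak{P}$ of $K$ above $\mathfrak{p}$, write $G_{\mathfrak{P}}=\Gal(K_{\mathfrak{P}}/F_{\mathfrak{p}})$ for the decomposition group, and let $v_K\colon K_{\mathfrak{P}}^{\times}\to\mathbb{Z}$ be the normalized valuation of $K_{\mathfrak{P}}$. Every $\sigma\in G_{\mathfrak{P}}$ preserves $v_K$, so the short exact sequence of $G_{\mathfrak{P}}$-modules
\begin{equation*}
1\longrightarrow U_{K_{\mathfrak{P}}}\longrightarrow K_{\mathfrak{P}}^{\times}\xrightarrow{\,v_K\,}\mathbb{Z}\longrightarrow 0
\end{equation*}
carries the trivial action on $\mathbb{Z}$.

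First I would take $G_{\mathfrak{P}}$-cohomology of this sequence, extracting the exact segment
\begin{equation*}
(K_{\mathfrak{P}}^{\times})^{G_{\mathfrak{P}}}\xrightarrow{\,v_K\,}\mathbb{Z}^{G_{\mathfrak{P}}}\xrightarrow{\,\delta\,}H^1(G_{\mathfrak{P}},U_{K_{\mathfrak{P}}})\longrightarrow H^1(G_{\mathfrak{P}},K_{\mathfrak{P}}^{\times}).
\end{equation*}
By local Hilbert~90 the last term vanishes, so the connecting map $\delta$ is surjective, and exactness at $\mathbb{Z}^{G_{\mathfrak{P}}}$ shows that $\delta$ factors through an isomorphism
\begin{equation*}
\Coker\!\left((K_{\mathfrak{P}}^{\times})^{G_{\mathfrak{P}}}\xrightarrow{\,v_K\,}\mathbb{Z}^{G_{\mathfrak{P}}}\right)\;\xrightarrow{\ \sim\ }\;H^1(G_{\mathfrak{P}},U_{K_{\mathfrak{P}}}).
\end{equation*}

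It then remains to identify the left-hand side explicitly. Since $K_{\mathfrak{P}}/F_{\mathfrak{p}}$ is Galois with group $G_{\mathfrak{P}}$ we have $(K_{\mathfrak{P}}^{\times})^{G_{\mathfrak{P}}}=F_{\mathfrak{p}}^{\times}$, while $\mathbb{Z}^{G_{\mathfrak{P}}}=\mathbb{Z}$ because the action is trivial. If $\pi$ is a uniformizer of $F_{\mathfrak{p}}$ then $v_K(\pi)=e_{\mathfrak{p}(K/F)}$ by the very definition of the ramification index, and $v_K$ vanishes on $U_{F_{\mathfrak{p}}}$; hence $v_K(F_{\mathfrak{p}}^{\times})=e_{\mathfrak{p}(K/F)}\mathbb{Z}$ and the cokernel above is precisely $\mathbb{Z}/e_{\mathfrak{p}(K/F)}\mathbb{Z}$. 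Combining the two displays yields the asserted isomorphism, canonical because it is induced by the connecting homomorphism $\delta$. I do not expect a genuine obstacle here: the only points needing a word of care are that the valuation sequence is really $G_{\mathfrak{P}}$-equivariant with trivial action on $\mathbb{Z}$ (immediate, as Galois automorphisms are valuation-preserving) and the appeal to local Hilbert~90 for $H^1(G_{\mathfrak{P}},K_{\mathfrak{P}}^{\times})=0$, which is classical; one could alternatively argue with Tate cohomology, but the ordinary long exact sequence already suffices, keeps the map canonical, and in fact shows the ramification hypothesis on $\mathfrak{p}$ is not strictly needed.
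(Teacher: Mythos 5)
Your argument is correct and is essentially the standard proof given in the cited reference \cite[Lemma 2.2]{Aviles} (the paper itself only cites this result): apply $G_{\mathfrak{P}}$-cohomology to the valuation sequence $1\to U_{K_{\mathfrak{P}}}\to K_{\mathfrak{P}}^{\times}\to\mathbb{Z}\to 0$, kill $H^1(G_{\mathfrak{P}},K_{\mathfrak{P}}^{\times})$ by local Hilbert 90, and identify $v_K(F_{\mathfrak{p}}^{\times})=e_{\mathfrak{p}(K/F)}\mathbb{Z}$. The only cosmetic remark is that $K_{\mathfrak{P}}$ should be understood as the completion (as in the reference), which is what your use of the local theory implicitly assumes; your observation that the ramification hypothesis is not needed (both sides being trivial in the unramified case) is also correct.
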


%\begin{proof}
%	See \cite[Lemma 2.3 and its proof]{Aviles}. Note that the given isomorphism 
%	\begin{equation*}
	%	\Coker\left(I(F)_S\rightarrow I(K)_S^G \right) \simeq \bigoplus_{\mathfrak{p} \in R_{K/F} \backslash S} \mathbb{Z}/{e_{\mathfrak{p}}\mathbb{Z}}
	%\end{equation*}
%	in \cite[proof of Lemma 2.3]{Aviles} is a generalization of the map $\psi$ as defined in \cite[$\S$3]{Zantema}.
%\end{proof}

%Using the above lemma and the same method as in \cite[$\S$ 2]{MR2}, 

%Avil\'es  found the following  exact sequence which, by the above lemma,  is a vast generalization of  \eqref{equation, BRZ exact sequence}. So it can be called as the ``\textit{$S$-relative version of \eqref{equation, BRZ exact sequence}}'' or shortly as ``\textit{$S$-BRZ}''.
\begin{theorem} \cite[Theorem 2.4]{Aviles} \label{theorem, Aviles exact sequence}
	For $K/F$ a finite Galois extension of number fields with Galois group $G$, there exists an exact sequence
		\begin{equation} \label{equation, Aviles exact sequence}
			\xymatrix{
				0 \rightarrow    \Ker({\epsilon}_{K/F,S}) \rightarrow   H^1(G,U_{K,S}) \xrightarrow{\lambda}  \bigoplus_{\mathfrak{p} \in R_{K/F} \backslash S} H^1(G_{\mathfrak{P}},U_{K_{\mathfrak{P}}}) \rightarrow \Ost(K/F)_S  \rightarrow  0,
			}
	\end{equation}
	where $ \epsilon_{K/F,S}:  \Cl(F)_S \rightarrow  \left(\Cl(K)_S\right)^G$ denotes the $S$-capitulation map, $R_{K/F}$ denotes the set of all non-archimedean primes of $F$  ramified in $K$, $\mathfrak{P}$ is a prime of $K$ above $\mathfrak{p}$, and $G_{\mathfrak{P}}=\Gal(K_{\mathfrak{P}}/F_{\mathfrak{p}})$ for $K_{\mathfrak{P}}$ (resp.  $F_{\mathfrak{p}}$)  the localization of $K$ (resp. $F$) at $\mathfrak{P}$ (resp. at $\mathfrak{p}$).
\end{theorem}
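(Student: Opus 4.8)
The plan is to obtain the sequence \eqref{equation, Aviles exact sequence} as the snake-lemma sequence of a morphism from the tautological exact sequence defining $\Cl(F)_S$ to the exact sequence \eqref{equation, ES-PKG to IKG to ClG-trans}, after reinterpreting the relevant cokernels cohomologically. Two preliminary identifications are needed. First, applying the long exact $G$-cohomology sequence to $0 \to U_{K,S} \to K^{\times} \to P(K)_S \to 0$ and using $(U_{K,S})^G = U_{F,S}$, $(K^{\times})^G = F^{\times}$ and $H^1(G,K^{\times}) = 0$ (Hilbert's Theorem~90), one finds that the image of $F^{\times}$ in $(P(K)_S)^G$ is exactly $P(F)_S$ (extended to $K$), whence an isomorphism $(P(K)_S)^G / P(F)_S \simeq H^1(G,U_{K,S})$. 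Second, by Theorem~\ref{theorem, free generators for I(K)SG} the ideal-extension map $j \colon I(F)_S \to (I(K)_S)^G$ is, with respect to the free $\mathbb{Z}$-bases $\{\mathfrak{p}\}_{\mathfrak{p} \in \mathbb{P}_F \setminus S}$ of $I(F)_S$ and $\{\Pi_{\mathfrak{p}^{f_{\mathfrak{p}(K/F)}}}(K/F)\}$ of $(I(K)_S)^G$, multiplication by $e_{\mathfrak{p}(K/F)}$ on the $\mathfrak{p}$-th coordinate, since $\mathfrak{p}\mathcal{O}_K = \big(\Pi_{\mathfrak{p}^{f_{\mathfrak{p}(K/F)}}}(K/F)\big)^{e_{\mathfrak{p}(K/F)}}$. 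Hence $j$ is injective and $\Coker(j) = \bigoplus_{\mathfrak{p} \in \mathbb{P}_F \setminus S} \mathbb{Z}/e_{\mathfrak{p}(K/F)}\mathbb{Z} = \bigoplus_{\mathfrak{p} \in R_{K/F} \setminus S} \mathbb{Z}/e_{\mathfrak{p}(K/F)}\mathbb{Z}$, which by Lemma~\ref{lemma, Z/e_vZ is isomorphic to H^1(G_w,U_w)} is $\bigoplus_{\mathfrak{p} \in R_{K/F} \setminus S} H^1(G_{\mathfrak{P}}, U_{K_{\mathfrak{P}}})$.

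Next I would assemble the commutative diagram with exact rows
\[ 0 \to P(F)_S \to I(F)_S \to \Cl(F)_S \to 0, \qquad 0 \to (P(K)_S)^G \to (I(K)_S)^G \to \Po(K/F)_S \to 0, \]
where the first row is the defining exact sequence of the $S$-ideal class group, the second row is \eqref{equation, ES-PKG to IKG to ClG-trans} rewritten via Corollary~\ref{corollary, S-RPG coincides with Cl(K)SGtrans}, and the vertical arrows are the two ideal-extension maps together with $\epsilon_{K/F,S}^{\prime}$ from \eqref{equation, s-capitulation map prime}. Since the two left-hand verticals are injective, the snake lemma produces the exact sequence
\[ 0 \to \Ker(\epsilon_{K/F,S}^{\prime}) \to (P(K)_S)^G / P(F)_S \to \Coker(j) \to \Coker(\epsilon_{K/F,S}^{\prime}) \to 0. \]
Now $\Ker(\epsilon_{K/F,S}^{\prime}) = \Ker(\epsilon_{K/F,S})$ (the two maps differ only in their codomain), $\Coker(\epsilon_{K/F,S}^{\prime}) = \Ost(K/F)_S$ by \eqref{equation, S-relative Ostrowski quotient coincides with cokernel of epsilon'}, and substituting the two identifications from the previous paragraph yields precisely \eqref{equation, Aviles exact sequence}.

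The diagram chase itself is routine; the main obstacle is the identification of the snake connecting map with the localization map $\lambda$. Concretely, one must verify that under the isomorphism $(P(K)_S)^G / P(F)_S \simeq H^1(G,U_{K,S})$ and the isomorphism $\Coker(j) \simeq \bigoplus_{\mathfrak{p}} H^1(G_{\mathfrak{P}}, U_{K_{\mathfrak{P}}})$ of Lemma~\ref{lemma, Z/e_vZ is isomorphic to H^1(G_w,U_w)}, the connecting homomorphism becomes the natural restriction (localization) map on $H^1$. This requires comparing the global sequence $0 \to U_{K,S} \to K^{\times} \to P(K)_S \to 0$ with its local analogues $0 \to U_{K_{\mathfrak{P}}} \to K_{\mathfrak{P}}^{\times} \to \mathbb{Z} \to 0$ at each $\mathfrak{p} \in R_{K/F} \setminus S$, and invoking the compatibility of restriction maps with connecting homomorphisms, exactly as in the proof of Lemma~\ref{lemma, Z/e_vZ is isomorphic to H^1(G_w,U_w)}. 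A secondary point is the commutativity of the right-hand square of the diagram, i.e. that the surjection $(I(K)_S)^G \to \Po(K/F)_S$ of \eqref{equation, ES-PKG to IKG to ClG-trans} carries the extension map $I(F)_S \to (I(K)_S)^G$ to $\epsilon_{K/F,S}^{\prime}$; this is immediate from the fact that $\epsilon_{K/F,S}$ is by definition induced by $I(F)_S \to I(K)_S$, together with $(P(K)_S)^G = P(K)_S \cap (I(K)_S)^G$.
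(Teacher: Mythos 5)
Your argument is essentially correct, but note that the paper itself offers no proof of this statement: it is quoted verbatim from Gonz\'alez-Avil\'es \cite[Theorem 2.4]{Aviles}, so there is nothing internal to compare against. What you give is a self-contained derivation that directly generalizes the classical Brumer--Rosen/Zantema proof of \eqref{equation, BRZ exact sequence}: the two rows you compare are exactly the $H^0$-level pieces of the fundamental sequences $1\to U_{K,S}\to K^{\times}\to P(K)_S\to 1$ and \eqref{equation, ES-PKG to IKG to ClG-trans}, and your snake-lemma packaging, combined with Theorem~\ref{theorem, free generators for I(K)SG}, Corollary~\ref{corollary, S-RPG coincides with Cl(K)SGtrans} and Lemma~\ref{lemma, Z/e_vZ is isomorphic to H^1(G_w,U_w)}, produces \eqref{equation, S-BRZ} (hence \eqref{equation, Aviles exact sequence}) in one stroke. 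Gonz\'alez-Avil\'es reaches the same sequence by splicing the long exact cohomology sequences of these two short exact sequences (together with Shapiro's lemma for the permutation module $I(K)_S$ and the local identification of Lemma~\ref{lemma, Z/e_vZ is isomorphic to H^1(G_w,U_w)}); your route trades that cohomological bookkeeping for the explicit free-generator computation of $\Coker(j)$, which is arguably more transparent and fits the P\'olya-group viewpoint of this paper. It also has the virtue of re-proving the unlabelled exact sequence \eqref{equation, BRZ exact sequence} on taking $S=S_{\infty}$.

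Two small points deserve care. First, your key identification $(P(K)_S)^G/P(F)_S\simeq H^1(G,U_{K,S})$ requires $P(K)_S$ to mean the image of $K^{\times}$ in $I(K)_S$ under the prime-to-$S$ divisor map (so that $1\to U_{K,S}\to K^{\times}\to P(K)_S\to 1$ is exact); with the literal reading ``principal ideals with support outside $S$'' the map $K^{\times}\to P(K)_S$ is neither everywhere defined nor has kernel $U_{K,S}$. This is Gonz\'alez-Avil\'es's convention and is already forced by \eqref{equation, ES-H1(G,PK) to H2(G,UK)}, so you are consistent with the intended meaning, but it should be stated. Second, the map $H^1(G,U_{K,S})\to\Coker(j)$ in your sequence is the map between cokernels induced by the inclusion $(P(K)_S)^G\subseteq(I(K)_S)^G$, not the snake connecting homomorphism (that is the map $\Ker(\epsilon_{K/F,S})\to\Coker(a)$); the compatibility check you sketch is the right one if you want to identify this induced map with the localization map $\lambda$ of Avil\'es, though for the bare existence statement as quoted here it is not needed.
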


%\begin{remark}
%	Using the exact sequence \eqref{equation, Aviles exact sequence}, one can find an alternative description of $\Ost(K/F)_S$, see \cite[Proposition 2.9]{Aviles}.
%\end{remark}

\begin{remark}
	By Lemma \ref{lemma, Z/e_vZ is isomorphic to H^1(G_w,U_w)} the exact sequence \eqref{equation, Aviles exact sequence} can be  re-written as
	\begin{equation} \label{equation, S-BRZ}
			\tag{S-BRZ}
			\xymatrix{
				0 \rightarrow    \Ker({\epsilon}_{K/F,S}) \rightarrow   H^1(G,U_{K,S}) \xrightarrow{\theta_{K/F,S}}  \bigoplus_{\mathfrak{p} \in R_{K/F} \backslash S} \frac{\mathbb{Z}}{e_{\mathfrak{p}(K/F)}\mathbb{Z}}\rightarrow \Ost(K/F)_S  \rightarrow  0,
			}
	\end{equation}
which is a vast generalization of  \eqref{equation, BRZ exact sequence}. Indeed, the above exact sequence can be thought of as the ``\textit{$S$-relative version of \eqref{equation, BRZ exact sequence}}'' which we call ``\textit{$S$-BRZ}''.
\end{remark}

\subsection{Some applications of $S$-BRZ}
In \cite[$\S$2]{SRM}, the authors and A. Rajaei used the exact sequence \eqref{equation, BRZ exact sequence} to give short and simple proofs for some well-known results in the literature. Using \eqref{equation, S-BRZ},  one can obtain these results in a more general setting, namely in their $S$-relative versions. 

%\begin{remark}
%	Recall that the notation $\epsilon_{K/F,S}$, which is appearing in the results below, denotes the $S$-capitulation map
%	\begin{equation*}
%		\epsilon_{K/F,S}:  \Cl(F)_S \rightarrow  \left(\Cl(K)_S\right)^G,
%	\end{equation*}
%for $K/F$ a finite Galois extension with Galois group $G$.
%\end{remark}

\begin{theorem} \label{theorem, Iwasawa-khare-Prasad for S-integers}
	\textbf{(generalizing Iwasawa-Khare-Prasad result \cite{Iwasawa,Khare})} Let $K/F$ be a  finite Galois extension of number fields and $S$ be a finite  set of primes of $F$ with $S_{\infty} \subseteq S$.
	If $K/F$ is unramified outside $S$, then 
	\begin{equation*}
		\Ker(\epsilon_{K/F,S}) \simeq H^1(\Gal(K/F),U_{K,S}), \quad \text{and} \qquad \Ost(K/F)_S=0.
	\end{equation*}
%where	$\epsilon_{K/F,S}:  \Cl(F)_S \rightarrow  \Cl(K)_S^G$ denotes  the $S$-capitulation map.
\end{theorem}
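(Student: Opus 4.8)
The plan is to read off both statements directly from the \eqref{equation, S-BRZ} exact sequence. The hypothesis that $K/F$ is unramified outside $S$ says precisely that every non-archimedean prime $\mathfrak{p}$ of $F$ ramified in $K$ lies in $S$, i.e. $R_{K/F} \subseteq S$, hence $R_{K/F} \backslash S = \emptyset$. Therefore the middle term $\bigoplus_{\mathfrak{p} \in R_{K/F} \backslash S} \mathbb{Z}/e_{\mathfrak{p}(K/F)}\mathbb{Z}$ of \eqref{equation, S-BRZ} is the zero group. Feeding this into the four-term exact sequence
\begin{equation*}
0 \rightarrow \Ker({\epsilon}_{K/F,S}) \rightarrow H^1(\Gal(K/F),U_{K,S}) \xrightarrow{\theta_{K/F,S}} 0 \rightarrow \Ost(K/F)_S \rightarrow 0,
\end{equation*}
exactness at $H^1(\Gal(K/F),U_{K,S})$ forces $\theta_{K/F,S}$ to be the zero map, so $\Ker(\theta_{K/F,S}) = H^1(\Gal(K/F),U_{K,S})$; combined with injectivity of $\Ker({\epsilon}_{K/F,S}) \hookrightarrow H^1(\Gal(K/F),U_{K,S})$ and exactness at that spot, the first map is an isomorphism, giving $\Ker(\epsilon_{K/F,S}) \simeq H^1(\Gal(K/F),U_{K,S})$. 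Exactness at $\Ost(K/F)_S$, sitting between two zero groups, yields $\Ost(K/F)_S = 0$.

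The key steps, in order, are: (1) translate ``unramified outside $S$'' into $R_{K/F} \backslash S = \emptyset$; (2) invoke Theorem \ref{theorem, Aviles exact sequence} (or its reformulation \eqref{equation, S-BRZ}, which already uses Lemma \ref{lemma, Z/e_vZ is isomorphic to H^1(G_w,U_w)}) to get the four-term sequence; (3) substitute the empty direct sum and chase exactness. There is essentially no obstacle here — the entire content has been packaged into \eqref{equation, S-BRZ}, and this theorem is the cleanest possible corollary: the main ``work'' is recognizing that the hypothesis kills the ramification term. One should perhaps add a sentence noting that $U_{K,S}$ and the $S$-capitulation map are the genuinely $S$-arithmetic objects replacing $U_K$ and $\epsilon_{K/F}$ in the classical Iwasawa–Khare–Prasad statement, so that the $S = S_\infty$ specialization (when $K/F$ is everywhere unramified) recovers the classical result via \eqref{equation, BRZ exact sequence}.

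If one instead wanted a proof not routed through \eqref{equation, S-BRZ}, the alternative would be to use Corollary \ref{corollary, S-RPG coincides with Cl(K)SGtrans} identifying $\Po(K/F)_S = (\Cl(K)_S)^G_{\trans}$ together with the exact sequences of Proposition \ref{proposition, Aviles results}: unramifiedness outside $S$ makes $H^2(G,U_{K,S}) \to H^2(G,K^\times)$ injective (local ramification invariants vanish), so $H^1(G,P(K)_S) = 0$ by \eqref{equation, ES-H1(G,PK) to H2(G,UK)}, whence \eqref{equation, ES-ClG-trans to ClG} gives $(\Cl(K)_S)^G = (\Cl(K)_S)^G_{\trans} = \Po(K/F)_S$, and then the cokernel description \eqref{equation, S-relative Ostrowski quotient coincides with cokernel of epsilon'} shows $\Ost(K/F)_S = \Coker(\epsilon'_{K/F,S}) = \Coker(\epsilon_{K/F,S})$, which one still must argue is trivial. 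This route is strictly more laborious, so I would present the \eqref{equation, S-BRZ} argument and relegate this to a remark at most.
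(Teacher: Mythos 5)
Your proof is correct and takes exactly the paper's route: the paper's proof is simply ``immediately follows from \eqref{equation, S-BRZ}'', and your argument spells out the same observation that unramifiedness outside $S$ makes $R_{K/F}\setminus S$ empty, so the middle direct sum vanishes and the exact sequence yields both conclusions. The extra detail and the alternative sketch are fine but not needed.
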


\begin{proof}
	Immediately follows from \eqref{equation, S-BRZ}.
\end{proof}

\begin{corollary}
	Let $K/F$ be a finite cyclic extension of number fields and $S$ be a finite  set of primes of $F$ with $S_{\infty} \subseteq S$. If $K/F$ is unramified outside $S$, then $H^2(G,U_{K,S}) \simeq \Coker(\epsilon_{K/F,S})$.
\end{corollary}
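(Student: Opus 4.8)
The corollary should follow by feeding the hypothesis into the \eqref{equation, S-BRZ} sequence and then invoking periodicity of Tate cohomology for the cyclic group $G$. First I would apply Theorem \ref{theorem, Iwasawa-khare-Prasad for S-integers}: since $K/F$ is unramified outside $S$, we get $\Ost(K/F)_S = 0$ and the isomorphism $\Ker(\epsilon_{K/F,S}) \simeq H^1(G,U_{K,S})$. Equivalently, in this situation the connecting map $\theta_{K/F,S}$ in \eqref{equation, S-BRZ} has target $\bigoplus_{\mathfrak{p} \in R_{K/F}\setminus S} \mathbb{Z}/e_{\mathfrak{p}(K/F)}\mathbb{Z}$; but $R_{K/F}\setminus S = \varnothing$ because every ramified prime lies in $S$, so this direct sum is zero and the sequence degenerates, giving the stated kernel isomorphism directly.

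Next I would bring in the other exact sequence from Proposition \ref{proposition, Aviles results}, namely \eqref{equation, ES-H1(G,PK) to H2(G,UK)}, together with \eqref{equation, ES-ClG-trans to ClG}, or more efficiently argue via the group $H^1(G,P(K)_S)$. The key point is to compute $\Coker(\epsilon_{K/F,S})$. From the proposition restated just after Theorem \ref{theorem, Aviles exact sequence}'s setup (the "BRZ" application proposition: $0 \to \Ost(K/F)_S \to \Coker(\epsilon_{K/F,S}) \to H^1(G,P(K)_S) \to 0$), and using $\Ost(K/F)_S = 0$, we get $\Coker(\epsilon_{K/F,S}) \simeq H^1(G,P(K)_S)$. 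Then \eqref{equation, ES-H1(G,PK) to H2(G,UK)} embeds $H^1(G,P(K)_S)$ into $H^2(G,U_{K,S})$.

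To finish, I would show this embedding is onto when $G$ is cyclic. For cyclic $G$ of order $n$, Tate cohomology is $2$-periodic, so $H^2(G,U_{K,S}) \cong \hat H^0(G,U_{K,S}) = U_{F,S}/N_{K/F}U_{K,S}$, and $H^1(G,K^\times) = 0$ by Hilbert 90, hence $H^2(G,K^\times) \cong \hat H^0(G,K^\times) = F^\times/N_{K/F}K^\times$. The map $H^2(G,U_{K,S}) \to H^2(G,K^\times)$ in \eqref{equation, ES-H1(G,PK) to H2(G,UK)} is induced by inclusion; under the periodicity identifications it becomes $U_{F,S}/N_{K/F}U_{K,S} \to F^\times/N_{K/F}K^\times$. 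Its kernel is exactly $H^1(G,P(K)_S)$ by the exactness of \eqref{equation, ES-H1(G,PK) to H2(G,UK)}. The claim $H^2(G,U_{K,S}) \simeq \Coker(\epsilon_{K/F,S})$ is equivalent to this map being the zero map, i.e. $U_{F,S} \cap N_{K/F}K^\times = N_{K/F}U_{K,S}$: a unit of $F$ (an $S$-unit) that is a global norm should already be a norm of an $S$-unit. I expect this — the surjectivity of $H^1(G,P(K)_S) \to H^2(G,U_{K,S})$ — to be the main obstacle; it is where the hypothesis "unramified outside $S$" must re-enter, presumably through local analysis at the ramified primes (all inside $S$) to show no local norm obstruction survives, or more cleanly by noting that under the unramified-outside-$S$ hypothesis $P(K)_S$ sits in a sequence forcing $H^1(G,P(K)_S) \twoheadrightarrow H^2(G,U_{K,S})$ because $H^1(G,I(K)_S)$-type terms vanish. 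I would write the argument so that it reads off directly from chasing \eqref{equation, ES-PKG to IKG to ClG-trans}, \eqref{equation, ES-ClG-trans to ClG}, \eqref{equation, ES-H1(G,PK) to H2(G,UK)}, and \eqref{equation, S-BRZ} simultaneously, with the cyclicity of $G$ invoked only at the final periodicity step $H^2 \cong \hat H^0$.
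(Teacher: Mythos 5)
Your reduction steps match the paper's: from Theorem \ref{theorem, Iwasawa-khare-Prasad for S-integers} (equivalently, $R_{K/F}\setminus S=\varnothing$ in \eqref{equation, S-BRZ}) you get $\Ost(K/F)_S=0$, and then $\Coker(\epsilon_{K/F,S})\simeq H^1(G,P(K)_S)$, which you read off from the restated \cite[Proposition 2.2]{Aviles} and the paper reads off from \eqref{equation, sequence: Po(K/F)S to Cl(K)SG to H1(G,P(K)S)} together with $\Po(K/F)_S=\epsilon_{K/F,S}(\Cl(F)_S)$ --- the same content. The divergence is at the final step: the paper closes by invoking the $S$-version of Kisilevsky's lemma, \eqref{equation, S-version of Kisilevsky result}, i.e.\ precisely the isomorphism $H^1(G,P(K)_S)\simeq H^2(G,U_{K,S})$ that you still owe. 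In your outline this is the surjectivity of the injection coming from \eqref{equation, ES-H1(G,PK) to H2(G,UK)}, and you explicitly leave it as something you ``expect,'' offering two unexecuted strategies; that is a genuine gap, and it is the only nontrivial point of the whole argument. There is also a slip in your element-level translation: for cyclic $G$, the map $H^2(G,U_{K,S})\to H^2(G,K^{\times})$ being the \emph{zero} map means $U_{F,S}\subseteq N_{K/F}(K^{\times})$ (every $S$-unit of $F$ is a global norm), whereas the condition you wrote, $U_{F,S}\cap N_{K/F}(K^{\times})=N_{K/F}(U_{K,S})$, is the \emph{injectivity} of that map, i.e.\ the vanishing of $H^1(G,P(K)_S)$ --- not what is needed.

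Moreover, the missing surjectivity cannot be supplied by the routine local analysis you sketch, because ``unramified outside $S$'' says nothing about the places in $S$: by Hasse's norm theorem, $U_{F,S}\subseteq N_{K/F}(K^{\times})$ requires the $S$-units of $F$ to be local norms at the places of $S$ as well. Concretely, take $F=\mathbb{Q}$, $K=\mathbb{Q}(i)$, $S=\{2,\infty\}$: the extension is cyclic and unramified outside $S$, the norms of the $S$-units $i,\,1+i$ of $K$ are exactly the powers of $2$, and $-1$ is not a norm from $K^{\times}$ (norms are sums of two squares), so $H^2(G,U_{K,S})\simeq \mathbb{Z}/2\mathbb{Z}$ maps nontrivially to $H^2(G,K^{\times})$ and the injection in \eqref{equation, ES-H1(G,PK) to H2(G,UK)} is not onto (here $H^1(G,P(K)_S)=0$); a totally real variant with the same behaviour is $K=\mathbb{Q}(\sqrt{3})$, $S=\{2,3,\infty\}$, where $-1$ fails to be a local norm at $3$. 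So the step you postponed is not merely unproven in your write-up: as stated it fails under the hypotheses of the corollary, and closing it requires extra input (for instance, that every $S$-unit of $F$ be a local norm at the places of $S$, which is automatic in the classical case $S=S_{\infty}$ with $K/F$ unramified everywhere). The same caveat applies to the paper's appeal to the ``easily obtained'' isomorphism \eqref{equation, S-version of Kisilevsky result}, so this point deserves an explicit argument or an explicit additional hypothesis rather than an expectation.
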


\begin{proof}
	Using the exact sequence \eqref{equation, ES-ClG-trans to ClG} and Corollary \ref{corollary, S-RPG coincides with Cl(K)SGtrans} we get the following exact sequence
	\begin{equation} \label{equation, sequence: Po(K/F)S to Cl(K)SG to H1(G,P(K)S)}
		0 \rightarrow \Po(K/F)_S \rightarrow  \left(\Cl(K)_S\right)^G \rightarrow H^1(G,P(K)_S) \rightarrow  0.
	\end{equation}
	Since $K/F$ is cyclic and unramified outside $S$, one can easily obtain the ``$S$-version'' of Kisilevsky's result in \cite[Lemma 1]{Kisilevsky}:
	\begin{equation} \label{equation, S-version of Kisilevsky result}
		H^1(G,P(K)_S) \simeq H^2(G,U_{K,S}).
	\end{equation}
	By Theorem \ref{theorem, Iwasawa-khare-Prasad for S-integers} one has $\Ost(K/F)_S=0$, i.e., $\Po(K/F)_S =\epsilon_{K/F,S}\left(\Cl(F)_S\right)$. Using the relations \eqref{equation, sequence: Po(K/F)S to Cl(K)SG to H1(G,P(K)S)} and \eqref{equation, S-version of Kisilevsky result}, the proof is completed.
\end{proof}

\begin{theorem} \label{theorem, Iwasawa result for S-integers}
	\textbf{(generalizing a result of Iwasawa \cite{Iwasawa})} Let $K/F$ be finite Galois extension of number fields. For  a modulus   $\mathfrak{m}$ of $F$, let  $F^{\mathfrak{m}}$ be the ray class field of $F$ of modulus $\mathfrak{m}$, and $S$ be the support of $\mathfrak{m}$ along with the set of all archimedean places of $F$. If $F^{\mathfrak{m}} \subseteq K$ and $K/F$ is unramified outside $S$, then
	\begin{equation*}
		H^1(\Gal(K/F),U_{K,S}) \simeq \Cl(F)_{S},
	\end{equation*}
	where $\Cl(F)_{S}$ denotes the $S$-ideal class group of $F$.
\end{theorem}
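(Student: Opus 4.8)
The plan is to derive the claim from Theorem~\ref{theorem, Iwasawa-khare-Prasad for S-integers} combined with the classical principal ideal theorem. Since $K/F$ is unramified outside $S$, Theorem~\ref{theorem, Iwasawa-khare-Prasad for S-integers} already gives $\Ker(\epsilon_{K/F,S}) \simeq H^1(\Gal(K/F),U_{K,S})$, so it remains only to show that $\epsilon_{K/F,S}$ is the zero map, equivalently that $\Ker(\epsilon_{K/F,S})=\Cl(F)_S$.

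First I would note that $F^{\mathfrak m}\subseteq K$ forces $H(F)\subseteq K$: for any modulus $\mathfrak m$ there is a natural surjection $\Cl_{\mathfrak m}(F)\twoheadrightarrow\Cl(F)$, so the Hilbert class field $H(F)$ --- the ray class field of trivial modulus --- is contained in $F^{\mathfrak m}$. By the principal ideal theorem of Artin--Furtw\"angler, the capitulation map $\epsilon_{H(F)/F}\colon\Cl(F)\to\Cl(H(F))$ vanishes; since extension of ideals is transitive one has $\epsilon_{K/F}=\epsilon_{K/H(F)}\circ\epsilon_{H(F)/F}$, so $\epsilon_{K/F}\colon\Cl(F)\to\Cl(K)$ is also the zero map.

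Next I would descend this vanishing to the $S$-setting. Extending an ideal to the ring of $S$-integers commutes with extension of ideals along $K/F$, so the natural surjections $\pi_F\colon\Cl(F)\twoheadrightarrow\Cl(F)_S$ and $\pi_K\colon\Cl(K)\twoheadrightarrow\Cl(K)_S$ fit into a commutative square with $\epsilon_{K/F}$ and $\epsilon_{K/F,S}$. Hence for any $\bar{\mathfrak a}\in\Cl(F)_S$, choosing $\mathfrak a\in\Cl(F)$ with $\pi_F(\mathfrak a)=\bar{\mathfrak a}$, we obtain $\epsilon_{K/F,S}(\bar{\mathfrak a})=\pi_K(\epsilon_{K/F}(\mathfrak a))=0$. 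Thus $\epsilon_{K/F,S}=0$, so $\Ker(\epsilon_{K/F,S})=\Cl(F)_S$, and together with the isomorphism supplied by Theorem~\ref{theorem, Iwasawa-khare-Prasad for S-integers} this yields $H^1(\Gal(K/F),U_{K,S})\simeq\Cl(F)_S$.

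The argument is essentially formal once these two inputs are in place; the only points that need care are the inclusion $H(F)\subseteq F^{\mathfrak m}$ and the legitimacy of invoking the (deep) principal ideal theorem, together with the routine check that the capitulation maps are compatible with passage to $S$-integers. A more self-contained route would be to establish directly an ``$S$-principal ideal theorem'' --- that every class of $\Cl(F)_S$ capitulates in the $S$-Hilbert class field, which sits inside $F^{\mathfrak m}$ --- but reducing to the classical statement via the surjection $\pi_F$ is shorter, and that is the approach I would take.
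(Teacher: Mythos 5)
Your proposal is correct and follows essentially the same route as the paper: both first use unramifiedness outside $S$ (via \eqref{equation, S-BRZ}, i.e.\ Theorem \ref{theorem, Iwasawa-khare-Prasad for S-integers}) to identify $H^1(\Gal(K/F),U_{K,S})$ with $\Ker(\epsilon_{K/F,S})$, and then show $\epsilon_{K/F,S}=0$ by a principal ideal theorem combined with transitivity of capitulation through $F^{\mathfrak{m}}\subseteq K$. The only difference is in how the vanishing is justified: the paper invokes the principal ideal theorem for ray class fields to get $\epsilon_{F^{\mathfrak{m}}/F,S}\left(\Cl(F)_S\right)=0$ directly at the $S$-level, whereas you reduce to the classical Artin--Furtw\"angler theorem for $H(F)\subseteq F^{\mathfrak{m}}$ and then descend along the natural surjections $\Cl(F)\twoheadrightarrow\Cl(F)_S$ and $\Cl(K)\twoheadrightarrow\Cl(K)_S$; this is an equally valid, slightly more self-contained justification of the same step.
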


\begin{proof}
	Since $K/F$ is unramified outside $S$, the exact sequence \eqref{equation, S-BRZ} implies that 
	\begin{equation} \label{equation, H^1(Gal,UK,S) is isomorphic to Ker(epsilon,S)}
		H^1(\Gal(K/F),U_{K,S}) \simeq \Ker(\epsilon_{K/F,S}).
	\end{equation}
	Since $F^{\mathfrak{m}} \subseteq K$, $\epsilon_{K/F,S}=\epsilon_{K/F^{\mathfrak{m}},S} \circ \epsilon_{F^{\mathfrak{m}}/F,S}$, by the principal ideal theorem for ray class fields, we have 
	\begin{equation*}
		\epsilon_{F^{\mathfrak{m}}/F,S} \left(\Cl(F)_S \right) =0,
	\end{equation*}
	which implies that $\epsilon_{K/F,S}\left( (\Cl(F)_S  \right) =0$. Hence $\Ker(\epsilon_{K/F,S}) = \Cl(F)_S$ and using the equation \eqref{equation, H^1(Gal,UK,S) is isomorphic to Ker(epsilon,S)} we obtain the desired isomorphism.
\end{proof}

\begin{theorem} \label{theorem, generalizing Tannaka's theorem to ray class fields}
	\textbf{(generalizing a result of Tannaka \cite[Theorem 8]{Tannaka})}	For a number field $F$ and  a modulus  $\mathfrak{m}$ of $F$, let  $F^{\mathfrak{m}}$ be the ray class field of $F$ of modulus $\mathfrak{m}$, and $S$ be the support of $\mathfrak{m}$ along with the set of all archimedean places of $F$. 
	Then there exists a surjective map
	\begin{equation*}
		\Gal(F^{\mathfrak{m}}/F) \twoheadrightarrow 	H^1(\Gal(F^{\mathfrak{m}}/F),U_{F^{\mathfrak{m}},S}).
	\end{equation*}
\end{theorem}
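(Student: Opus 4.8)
The plan is to read this off from Theorem \ref{theorem, Iwasawa result for S-integers} applied with $K=F^{\mathfrak{m}}$, combined with the Artin isomorphism for the ray class field and the obvious projection of the ray class group onto the $S$-class group. First I would check that Theorem \ref{theorem, Iwasawa result for S-integers} is applicable: the inclusion $F^{\mathfrak{m}}\subseteq K$ is trivial when $K=F^{\mathfrak{m}}$, and by the conductor theory of ray class fields the extension $F^{\mathfrak{m}}/F$ is ramified only at the primes dividing $\mathfrak{m}$, hence only at primes of $S$ (which contains the support of $\mathfrak{m}$ and all archimedean places), so $F^{\mathfrak{m}}/F$ is unramified outside $S$. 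Theorem \ref{theorem, Iwasawa result for S-integers} then provides an isomorphism $H^1(\Gal(F^{\mathfrak{m}}/F),U_{F^{\mathfrak{m}},S})\simeq \Cl(F)_S$.

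Next, by class field theory the Artin map gives an isomorphism $\Gal(F^{\mathfrak{m}}/F)\simeq \Cl_{\mathfrak{m}}(F)$, where $\Cl_{\mathfrak{m}}(F)$ is the ray class group of modulus $\mathfrak{m}$. I would then compose with the natural projection $\Cl_{\mathfrak{m}}(F)\twoheadrightarrow \Cl(F)$ onto the ordinary ideal class group — surjective since every ideal class contains a representative coprime to $\mathfrak{m}$ — followed by the projection $\Cl(F)\twoheadrightarrow \Cl(F)_S$, which is surjective because $\Cl(F)_S$ is the quotient of $\Cl(F)$ by the finite subgroup generated by the classes of the non-archimedean primes in $S$. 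This yields a surjection $\Gal(F^{\mathfrak{m}}/F)\twoheadrightarrow \Cl(F)_S$, and composing with the inverse of the isomorphism of Theorem \ref{theorem, Iwasawa result for S-integers} gives the asserted surjection $\Gal(F^{\mathfrak{m}}/F)\twoheadrightarrow H^1(\Gal(F^{\mathfrak{m}}/F),U_{F^{\mathfrak{m}},S})$.

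There is no real analytic or cohomological obstacle here, since the substantive content has already been absorbed into Theorem \ref{theorem, Iwasawa result for S-integers}; the only steps needing a brief justification are the standard facts that $F^{\mathfrak{m}}/F$ is unramified away from the support of $\mathfrak{m}$ and that the composite $\Cl_{\mathfrak{m}}(F)\to\Cl(F)\to\Cl(F)_S$ is surjective, both immediate from the definitions and from class field theory. Alternatively one could argue purely numerically, noting that $\#\Cl(F)_S\mid \#\Cl(F)\mid h_{\mathfrak{m}}=[F^{\mathfrak{m}}:F]$, but exhibiting the explicit chain of natural surjections is cleaner and keeps the resulting map canonical.
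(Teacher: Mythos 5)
Your proposal is correct and follows essentially the same route as the paper: apply Theorem \ref{theorem, Iwasawa result for S-integers} with $K=F^{\mathfrak{m}}$ to get $H^1(\Gal(F^{\mathfrak{m}}/F),U_{F^{\mathfrak{m}},S})\simeq \Cl(F)_S$, identify $\Gal(F^{\mathfrak{m}}/F)$ with the ray class group via Artin reciprocity, and compose with the natural surjection onto $\Cl(F)_S$. Your explicit verification that $F^{\mathfrak{m}}/F$ is unramified outside $S$ and your factorization of the surjection through $\Cl(F)$ are just slightly more detailed renderings of steps the paper leaves implicit.
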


\begin{proof}
	By Theorem \ref{theorem, Iwasawa result for S-integers} we have
	\begin{equation*} 
		\Cl(F)_S \simeq H^1(\Gal(F^{\mathfrak{m}}/F),U_{F^{\mathfrak{m}},S}).
	\end{equation*}
	
	On the one hand, by the \textit{Artin Reciprocity} one has
	\begin{equation*}
		\Gal(F^{\mathfrak{m}}/F) \simeq  \Cl(F)_{\mathfrak{m}},
	\end{equation*}	
	where $\Cl(F)_{\mathfrak{m}}$ denotes the %\textit{strict (narrow) 
		\textit{ray class group} of $F$ for $\mathfrak{m}$, i.e. the group $I(F)_{\mathfrak{m}}/P(F)_{\mathfrak{m}}$ for $I(F)_{\mathfrak{m}}$ the subgroup of $I(F)$ that do not involve the primes dividing $\mathfrak{m}$, and $P(F)_{\mathfrak{m}} \subseteq I(F)_{\mathfrak{m}}$ the group of principal ideals generated by some \textit{totally positive} elements $a \in \mathcal{O}_F$ with $a \equiv 1 (\mathrm{mod} \, \mathfrak{m})$, see \cite[Chapter 3]{NChildress}.

		On the other hand, there exists a surjective map $\Cl(F)_{\mathfrak{m}} \twoheadrightarrow \Cl(F)_S$ and the proof is completed.
	\end{proof}

	\begin{theorem} \label{theorem, a generalization of Hilbert 94 for S-integers}
		\textbf{(a generalization of Hilbert's theorem 94 \cite{Hilbert's book})} Let $K/F$ be a finite cyclic extension of number fields with Galois group $G$, and $S$ be a finite  set of primes of $F$ with $S_{\infty} \subseteq S$. If $K/F$ is unramified outside $S$, then 
		\begin{equation*}
			\left(\prod_{v \in S} \left|G_w\right|\right).\left|\Ker(\epsilon_{K/F,S})\right|=\left(U_{F,S}:N_{K/F}(U_{K,S})\right).[K:F],
		\end{equation*}
		where $G_w$ denotes the decomposition group of $G$ at $w$ for a prime $w$ of $K$ above $v \in S$.
	\end{theorem}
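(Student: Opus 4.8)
The plan is to combine the exact sequence \eqref{equation, S-BRZ} with the classical computation of the Herbrand quotient of an $S$-unit group in a cyclic extension. Since $G$ is cyclic of order $[K:F]$, Tate cohomology is $2$-periodic; for a finitely generated $G$-module $M$ write $q(M)=|\widehat{H}^{0}(G,M)|/|\widehat{H}^{1}(G,M)|$ for its Herbrand quotient. Taking $M=U_{K,S}$ and using $(U_{K,S})^{G}=U_{F,S}$ together with $\widehat{H}^{1}(G,U_{K,S})=H^{1}(G,U_{K,S})$, we obtain
\begin{equation*}
	q(U_{K,S})=\frac{\bigl(U_{F,S}:N_{K/F}(U_{K,S})\bigr)}{|H^{1}(G,U_{K,S})|}.
\end{equation*}
So the whole identity follows once we (i) evaluate $q(U_{K,S})$ and (ii) identify $H^{1}(G,U_{K,S})$ with $\Ker(\epsilon_{K/F,S})$.

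The first step is to evaluate $q(U_{K,S})$. Let $S_{K}$ be the (finite, $G$-stable) set of places of $K$ above $S$. By Dirichlet's $S$-unit theorem the logarithmic embedding identifies $U_{K,S}$, modulo the finite group $\mu_{K}$, with a $G$-stable full lattice in the hyperplane $\{(x_{w})\in \mathbb{R}[S_{K}]:\sum_{w}x_{w}=0\}$; hence, as a $\mathbb{Q}[G]$-module, $U_{K,S}\otimes_{\mathbb{Z}}\mathbb{Q}\cong \Ker(\mathbb{Q}[S_{K}]\xrightarrow{\Sigma}\mathbb{Q})$, where $\Sigma$ denotes the sum map. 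Since the Herbrand quotient vanishes on finite modules and depends only on the underlying $\mathbb{Q}[G]$-module,
\begin{equation*}
	q(U_{K,S})=q\bigl(\Ker(\mathbb{Z}[S_{K}]\xrightarrow{\Sigma}\mathbb{Z})\bigr)=\frac{q(\mathbb{Z}[S_{K}])}{q(\mathbb{Z})}.
\end{equation*}
Now $q(\mathbb{Z})=|\mathbb{Z}/[K:F]\mathbb{Z}|/|\mathrm{Hom}(G,\mathbb{Z})|=[K:F]$, while $\mathbb{Z}[S_{K}]=\bigoplus_{v\in S}\mathbb{Z}[G/G_{w}]$ decomposes along the $G$-orbits and, by Shapiro's lemma applied to $\mathbb{Z}[G/G_{w}]=\mathrm{Ind}_{G_{w}}^{G}\mathbb{Z}$, each summand has $q(\mathbb{Z}[G/G_{w}])=q_{G_{w}}(\mathbb{Z})=|G_{w}|$. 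Therefore $q(\mathbb{Z}[S_{K}])=\prod_{v\in S}|G_{w}|$ and
\begin{equation*}
	q(U_{K,S})=\frac{\prod_{v\in S}|G_{w}|}{[K:F]}.
\end{equation*}

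The second step uses the hypothesis. That $K/F$ is unramified outside $S$ says exactly that $R_{K/F}\setminus S=\emptyset$, so the middle summand of \eqref{equation, S-BRZ} vanishes and the sequence collapses to an isomorphism $\Ker(\epsilon_{K/F,S})\simeq H^{1}(G,U_{K,S})$ — this is precisely the content of Theorem \ref{theorem, Iwasawa-khare-Prasad for S-integers}. Substituting $|H^{1}(G,U_{K,S})|=|\Ker(\epsilon_{K/F,S})|$ into the two displayed expressions for $q(U_{K,S})$ and clearing denominators yields
\begin{equation*}
	\Bigl(\prod_{v\in S}|G_{w}|\Bigr)\cdot|\Ker(\epsilon_{K/F,S})|=\bigl(U_{F,S}:N_{K/F}(U_{K,S})\bigr)\cdot[K:F],
\end{equation*}
which is the claimed identity.

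The main obstacle is the Herbrand-quotient computation of the first step — specifically, making sure that $q(U_{K,S})=\prod_{v\in S}|G_{w}|/[K:F]$ is valid for \emph{any} $G$-stable $S\supseteq S_{\infty}$ (not only for $S$ large enough to contain the ramified or class-group-generating primes): this holds because the argument uses only $G$-stability of $S_{K}$ and Dirichlet's theorem, and it correctly records the archimedean contributions $|G_{w}|\in\{1,2\}$ already present in $\mathbb{Z}[S_{K}]$. Everything else is formal; note in particular that the collapse of \eqref{equation, S-BRZ} needs nothing beyond Theorem \ref{theorem, Iwasawa-khare-Prasad for S-integers}, and that cyclicity of $G$ enters only to make the Herbrand quotient meaningful.
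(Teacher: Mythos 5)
Your proof is correct and follows essentially the same route as the paper: collapse \eqref{equation, S-BRZ} (via Theorem \ref{theorem, Iwasawa-khare-Prasad for S-integers}) to get $\Ker(\epsilon_{K/F,S})\simeq H^1(G,U_{K,S})$, then compare with the Herbrand quotient of $U_{K,S}$ using $\widehat{H}^0(G,U_{K,S})=U_{F,S}/N_{K/F}(U_{K,S})$. The only difference is that the paper simply cites the formula $Q(G,U_{K,S})=\prod_{v\in S}|G_w|/[K:F]$ from \cite[Proof of Proposition 5.10]{NChildress}, whereas you reprove it via Dirichlet's $S$-unit theorem and Shapiro's lemma, which is the standard argument behind that citation.
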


	\begin{proof}
	On the one hand, since $K/F$ is unramified outside $S$, the exact sequence \eqref{equation, S-BRZ} implies that
		\begin{equation*}
			\left| H^1(G,U_{K,S}) \right|=\left| \Ker(\epsilon_{K/F,S}) \right|.
		\end{equation*}
On the other hand,	since $K/F$ is cyclic, one can use the \textit{Herbrand quotient}
		\begin{equation*}
			Q(G,U_{K,S})=\frac{\left|\widehat{H^0}(G,U_{K,S})\right|}{\left|H^1(G,U_{K,S})\right|},
		\end{equation*}
		where 
		\begin{equation*}
			\widehat{H^0}(G,U_{K,S})=\frac{U_{K,S}^G}{N_{K/F}(U_{K,S})}=\frac{U_{F,S}}{N_{K/F}(U_{K,S})},
		\end{equation*}
		and
		\begin{equation*}
			Q(G,U_{K,S})=\frac{\prod_{v \in S}\left|G_w\right|}{[K:F]},
		\end{equation*}
		see \cite[Proof of Proposition 5.10]{NChildress}. The above equalities give the desired result.
	\end{proof}
	
	\iffalse
	\begin{proposition} \cite[Proposition 2.9]{Aviles} \label{proposition, Aviles results Ost(K/F) is isomorphic to coker(phi)}
		For $K/F$ a finite Galois extension of number fields with Galois group $G$, let $\varphi_{K/F,S}$ be the composition
		\begin{equation*}
			P(K)_S^G \xrightarrow{\sim} \left(K^*/U_{K,S}\right)^G \xrightarrow{\theta_{K/F,S}} \bigoplus_{\mathfrak{p} \in R_{K/F} \backslash S} \left(K^*_{\mathfrak{P}}/U_{\mathfrak{P}}\right)^{G_{\mathfrak{P}}} \xrightarrow[\oplus ord_{\mathfrak{P}}]{\sim} \bigoplus_{\mathfrak{p} \in R_{K/F} \backslash S} \mathbb{Z},
		\end{equation*}
		where $P(K)_S^G$ denotes the group of principal fractional ideals of  $K$ with support outside $S$, $\theta_{K/F,S}$ is the natural localization map, $R_{K/F}$ denotes the set of all non-archimedean  primes of $F$ which ramify in $K$, $\mathfrak{P}$ is a fixed prime of $K$ above  $\mathfrak{p} \in R_{K/F} \backslash S$, $G_{\mathfrak{P}}=\Gal(K_{\mathfrak{P}}/F_{\mathfrak{p}})$  for $K_{\mathfrak{P}}$ (resp. $F_{\mathfrak{p}}$)  the localizations of $K$ (resp. $F$) at $\mathfrak{P}$ (resp. $\mathfrak{p}$), and
		$ord_{\mathfrak{P}}:K^*_\mathfrak{P} \rightarrow \mathbb{Z}$ denotes the valuation map at $\mathfrak{P}$. 
		
		Then
		
		\begin{equation*}
			\Ost(K/F)_S \simeq \Coker(\theta_{K/F,S})  \simeq \Coker(\varphi_{K/F,S}).
		\end{equation*}
	\end{proposition}
	\fi
	\begin{remark}
As the above results demonstrate, one may investigate finding the $S$-relative versions of the results presented in \cite[Section 3]{SRM}. However, there are obstacles in some special cases. For instance, the author and A. Rajaei proved that ``For $K/F$, a finite cyclic extension, we have $\Ost(\Gamma(K/F)/F)=0$, where $\Gamma(K/F)$ denotes the relative genus field of $K$ over $F$'' \cite[Theorem 3.19]{SRM}. However, for obtaining the $S$-version of this result, we need to define the notion of the $S$-relative genus field (in the narrow sense) and get the $S$-version of Terada's ``Principal Ideal Theorem'' \cite{Terada}. These will be investigated in future works.	
		%Like above, all results of the paper \cite{SRM} except  one, can be generalized to $S$-version. The only result of \cite{SRM} that can not generalize directly to $S$-version is Theorem $3.19$. For stating $S$-version of it, at first one should define the narrow version of the relative genus field and prove the analogue of the Terada's result \cite[Proposition 3.18]{SRM} for it. This will be done in an upcoming work.
	\end{remark}

\bibliographystyle{amsplain}

\end{document}